\newtheorem{satz}{Satz}[section]
\newtheorem{Theorem}[satz]{Theorem}
\newtheorem{Cor}[satz]{Corollary}
\newtheorem{Lemma}[satz]{Lemma}
\newtheorem{Prop}[satz]{Proposition}
\theoremstyle{definition}
\newtheorem{Remark}[satz]{Remark}
\newtheorem{Example}[satz]{Example}
\newtheorem*{Def*}{Definition}
\newtheorem*{Example*}{Example}
\newtheorem*{Theorem*}{Theorem}
\newtheorem*{Prop*}{Proposition}
\newtheorem*{Remark*}{Remark}
\newcommand{\R}{\mathbb{R}}
\newcommand{\Z}{\mathbb{Z}}
\newcommand{\C}{\mathbb{C}}
\newcommand{\N}{\mathbb{N}}
\newcommand{\HH}{\mathbb{H}}
\newcommand{\xr}{\xrightarrow}
\newcommand{\Ho}{\operatorname{Ho}}
\newcommand{\CP}{\mathbb{C}\text{P}}
\newcommand{\RP}{\mathbb{R}\text{P}}
\newcommand{\co}{\colon\thinspace}
\newcommand{\Star}{\mbox{\smaller[3]$\bigstar$}}
\DeclareMathOperator{\Rep}{Rep}
\DeclareMathOperator{\hocolim}{hocolim}
\DeclareMathOperator{\FI}{FI}
\DeclareMathOperator{\Ind}{Ind}
\DeclareMathOperator{\id}{id}
\DeclareMathOperator{\Comm}{Comm}
\DeclareMathOperator{\Sym}{Sym}
\begin{document}

\begin{abstract}
We describe the~$C_2$--equivariant homotopy type of the space of commuting~$n$--tuples in the stable unitary group in terms of Real K-theory. The result is used to give a complete calculation of the homotopy groups of the space of commuting~$n$--tuples in the stable orthogonal group, as well as of the coefficient ring for commutative orthogonal K--theory.
\end{abstract}

\title[Commuting matrices and Atiyah's real K-theory]{Commuting matrices and Atiyah's real K-theory}

\author{Simon Gritschacher}
\address{University of Copenhagen\\ Copenhagen, Denmark} 
\email{gritschacher@math.ku.dk}

\author{Markus Hausmann}
\address{University of Copenhagen\\ Copenhagen, Denmark} 
\email{hausmann@math.ku.dk}

\maketitle

\section{Introduction}
For a topological group~$G$, the space of commuting~$n$--tuples of elements in~$G$ is defined as
\[
C_n(G)=\{(g_1,\dots,g_n)\in G^{\times n}\,|\, g_i g_j=g_j g_i \text{ for all }i,j\} ,
\]
viewed as a subspace of~$G^{\times n}$, or, in other words, the space of group homomorphisms~$\Z^n\to G$.
Closely related is the representation space~$\Rep_n(G)$, which is defined as the quotient of~$C_n(G)$ by the action of~$G$ by conjugation.
These spaces are of classical interest because of their relevance in mathematical physics \cite{Wi82, KS00},
and also have interesting homotopy-theoretic properties: For example, they admit stable splittings \cite{AC07,BJS11}, they give rise to the spectrum of commutative K-theory \cite{ACT12,AG15,AGLT17}, and they satisfy rational homological stability when~$G$ runs through any of the sequences of classical Lie groups \cite{RS18}. We refer to \cite{CS16a} for a survey.

The purpose of the present article is to describe the homotopy type of the spaces of commuting elements in the stable unitary group~$U$ and the stable orthogonal group~$O$, and to use this description to compute their homotopy groups. Instead of considering the unitary case and the orthogonal case separately, it turns out to be useful to package them into a single object with~$C_2$--action and study this object through equivariant homotopy theory. Accordingly, we write~$C_n^\R$ for the space~$C_n(U)$ viewed as a~$C_2$--space via complex conjugation, with fixed point space~$C_n(O)$. Likewise, we write~$\Rep_n^{\R}$ for~$\Rep_n(U)$ viewed as a~$C_2$--space, with fixed point space~$\Rep_n(O)$. Our main result is a description of the equivariant homotopy type of~$C_n^{\R}$ and~$\Rep_n^{\R}$. 

Our description is in terms of the connective Real K-theory spectrum~$k\R$ introduced by Atiyah \cite{Ati66}. This is a genuine~$C_2$--spectrum whose underlying spectrum is equivalent to connective unitary K-theory~$ku$ and whose categorical fixed point spectrum~$k\R^{C_2}$ is equivalent to connective orthogonal K-theory~$ko$. The~$0$--th Postnikov section of~$k\R$ is the Eilenberg-MacLane spectrum for the constant Mackey functor~$\underline{\Z}$, which we denote by~$H\underline{\Z}$. Finally, we let~$S^{\sigma}$ denote the unit sphere in~$\C$, based at~$1$ and regarded as a~$C_2$--space via complex conjugation.
Then we show:
\begin{Theorem} \label{thm:main}
There are~$C_2$--equivalences
\[
C_n^{\R}\simeq \Omega^{\infty}(k\R\wedge (S^{\sigma})^{\times n})\,, \hspace{0.8cm} \Rep_n^{\R}\simeq \Omega^{\infty}(H\underline{\Z}\wedge  (S^{\sigma})^{\times n}).
\]
Under these equivalences, the map~$C_n^{\R}\to \Rep_n^{\R}$ is induced by the~$0$--th Postnikov section~$k\R\to H\underline{\Z}$.
\end{Theorem}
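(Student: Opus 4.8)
The plan is to prove the two equivalences in turn --- the one for $\Rep_n^{\R}$ first, then bootstrapping to the one for $C_n^{\R}$ --- arranging the construction so that compatibility with the Postnikov section $k\R\to H\underline\Z$ is automatic. For $\Rep_n^{\R}$: every finite-dimensional unitary representation of $\Z^n$ splits as a sum of characters, so the conjugation quotient $\Rep_n(U(k))$ is canonically the $k$-fold symmetric product of $\Hom(\Z^n,U(1))$, and since the stabilisation maps adjoin the trivial character, $\Rep_n(U)=\Sym^\infty\Hom(\Z^n,U(1))$, based at the trivial character. Complex conjugation acts coordinatewise on $\Hom(\Z^n,U(1))\cong(S^1)^{\times n}$, which by the definition of $S^\sigma$ is precisely the $C_2$-space $(S^\sigma)^{\times n}$, so $\Rep_n^{\R}\cong\Sym^\infty_{C_2}((S^\sigma)^{\times n})$. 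The $C_2$-equivariant Dold--Thom theorem (dos Santos) identifies the equivariant homotopy groups of an infinite symmetric product with Bredon homology for the constant Mackey functor $\underline\Z$, hence $\Sym^\infty_{C_2}((S^\sigma)^{\times n})\simeq\Omega^\infty(H\underline\Z\wedge(S^\sigma)^{\times n})$, which is the second equivalence.

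For $C_n^{\R}$ I would exploit the eigenvalue stratification. A point of $C_n(U(k))$ is an orthogonal decomposition $\C^k=\bigoplus_\chi V_\chi$ indexed by $\Hom(\Z^n,U(1))$, and I would filter $C_n^{\R}=\colim_k C_n(U(k))$ by the number of nontrivial characters $\chi\neq\mathbf 1$ with $V_\chi\neq 0$; this is a $C_2$-equivariant filtration exhausting $C_n^{\R}$, whose stratum over the trivial character contributes a stabilising $\coprod_k BU(k)$, with group completion $\Omega^\infty ku$ refining $C_2$-equivariantly to $\Omega^\infty k\R$ (the $K$-theory of Real vector spaces), and whose remaining strata are assembled from configurations in $(S^\sigma)^{\times n}\setminus\{\mathbf 1\}$ labelled by flag manifolds of Real vector spaces. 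Structurally this says that $C_n^{\R}$ is the infinite-loop space computing connective Real $K$-homology of $(S^\sigma)^{\times n}$: the group completion of the $C_2$-equivariant permutative category of finitely supported Real vector bundles over $(S^\sigma)^{\times n}$, whose $K$-theory spectrum is $k\R\wedge(S^\sigma)^{\times n}$. Concretely I would construct the comparison $C_n^{\R}\to\Omega^\infty(k\R\wedge(S^\sigma)^{\times n})$ sending $\bigoplus_\chi V_\chi$ to the $0$-cycle $\sum_\chi[V_\chi]\cdot\chi$ and prove it is a $\upi_*$-isomorphism by checking it on underlying spaces and on $C_2$-fixed points.

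On underlying spaces this is the non-equivariant identification $C_n(U)\simeq\Omega^\infty(ku\wedge(S^1)^{\times n})$ --- a statement about commutative unitary $K$-theory, provable by the same filtration, whose strata match the associated graded of the Bott--Postnikov tower of $ku\wedge(S^1)^{\times n}$, the $i$-th layer $\Sigma^{2i}H\Z\wedge(S^1)^{\times n}$ having infinite-loop space a suspension of the $\Rep$-space by Dold--Thom. Running this $C_2$-equivariantly, Real Bott periodicity makes the layers of the corresponding filtration of $k\R$ the regular-representation suspensions $\Sigma^{i(1+\sigma)}H\underline\Z$, so each filtration quotient of $C_n^{\R}$ is $\Omega^\infty(\Sigma^{i(1+\sigma)}H\underline\Z\wedge(S^\sigma)^{\times n})$, again governed by equivariant Dold--Thom, and a tower-comparison (spectral sequence) argument upgrades the layerwise equivalences to the asserted $C_2$-equivalence. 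The compatibility is then built in: the quotient maps $C_n(U(k))\to\Rep_n(U(k))$ are induced by the symmetric-monoidal functor sending a Real vector space to its rank, whose effect on $K$-theory is the $0$-th Postnikov section $k\R\to H\underline\Z$, and this functor sits at the bottom of the tower above.

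The main obstacle is the $C_2$-fixed-point (orthogonal) half of the comparison: one must identify $(k\R\wedge(S^\sigma)^{\times n})^{C_2}$ and show that $C_n(O)=\colim_k C_n(O(k))$ maps into it by an equivalence. The subtlety is that group-completion and labelled-configuration-space models typically only produce the Borel (naive) equivariant type, so one must work with the genuine spaces $C_n(U(k))$ throughout and contend with the two components of $O(k)$, the real flag manifolds occurring in the strata, and the bookkeeping between $ko=k\R^{C_2}$ and the underlying $ku$. In spectral terms the crux is that the comparison be an equivalence after geometric fixed points $\Phi^{C_2}$, which requires $\Phi^{C_2}(k\R)$ and its identification with the ``real-eigenvalue'' stratum of $C_n(O)$; together with the invocation of Real Bott periodicity, this is where essentially all of the equivariant content of the theorem is concentrated.
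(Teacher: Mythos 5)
Your starting point is the same as the paper's: the simultaneous eigenspace decomposition identifies $C_n^{\R}$ with the space of configurations in $(S^{\sigma})^{\times n}$ labelled by pairwise orthogonal finite--dimensional subspaces of $\C^{\infty}$ (Segal's cycle model for connective K--homology, made $C_2$--equivariant), and $\Rep_n^{\R}$ with $Sp^{\infty}((S^{\sigma})^{\times n})$. But from there your proposal has two genuine gaps, and they sit exactly where the work is. First, group--completeness. Your comparison map is described as landing in ``the group completion of the $C_2$--equivariant permutative category of finitely supported Real vector bundles over $(S^{\sigma})^{\times n}$''; that group completion is indeed $\Omega^{\infty}(k\R\wedge(S^{\sigma})^{\times n})$, but the theorem asserts that $C_n^{\R}$ itself --- not its group completion --- is equivalent to it. On underlying spaces this is harmless because $C_n(U)$ is connected, but $\pi_0(C_n(O))\cong(\Z/2)^{2^n-1}$ is a priori only an abelian monoid, and one must prove it is already a group; this is the paper's Lemma \ref{lem:grouplike}, a genuinely geometric argument (conjugate pairs of eigenspaces can be slid equivariantly to the basepoint, and a real eigenline with eigenvalue $-1$ is annihilated by adding a second one because $SO(2)$ is connected). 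The same issue arises for $\Rep$: dos Santos' theorem identifies the free abelian group (equivalently $Sp^{\infty}$ under a group--likeness hypothesis on all fixed points) with $\Omega^{\infty}(H\underline{\Z}\wedge -)$, and $((S^{\sigma})^{\times n})^{C_2}$ is disconnected, so the hypothesis must be checked rather than assumed. Without these checks your argument only produces group--completion maps, not the stated equivalences.

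Second, the genuine--equivariant delooping. You correctly flag that configuration/group--completion models threaten to give only the naive theory, but the route you propose to repair this --- matching an eigenvalue stratification of $C_n^{\R}$ against the $\overline{v}$--Bott tower of $k\R$ with layers $\Sigma^{i(1+\sigma)}H\underline{\Z}$, and then checking the comparison on geometric fixed points $\Phi^{C_2}$ --- is both unexecuted and not well posed as stated: the comparison is a map of $C_2$--\emph{spaces}, so what must be checked is underlying spaces and categorical fixed points, and there is no spectrum structure on $C_n^{\R}$ to which $\Phi^{C_2}$ could be applied before proving precisely what is at stake; moreover the claim that the strata of the eigenvalue filtration realize the Bott--tower layers is asserted without justification and is far from clear. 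The paper resolves the naive--versus--genuine issue quite differently and much more cheaply: the labelled configuration space is the value on $(S^{\sigma})^{\times n}$ of a $C_2$--$\Gamma$--space which is $C_2$--cofibrant and \emph{special on all graph subgroups} of $C_2\times\Sigma_k$ (Lemma \ref{lem:special}), so the equivariant Segal machine (Shimakawa/Schwede, Propositions \ref{prop:special} and \ref{prop:veryspecial}) delivers the genuine connective $C_2$--spectrum $k\R\wedge(S^{\sigma})^{\times n}$ and the equivalence $C_n^{\R}\simeq\Omega^{\infty}(k\R\wedge(S^{\sigma})^{\times n})$ as soon as group--likeness on fixed points is known; no Real Bott periodicity, geometric fixed points, or tower comparison enters, and the compatibility with the Postnikov section is the map of $\Gamma$--spaces recording dimensions of labels, as in your sketch.
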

The proof is geometric and based on the~$C_2$--equivariant version of Segal's model for connective K-theory \cite{Seg77}. Here,~`$C_2$--equivalence' means that there is a zig-zag of~$C_2$--equivariant maps inducing weak equivalences both on underlying spaces and on~$C_2$--fixed points. Hence, on underlying spaces the theorem gives equivalences
\[
C_n(U)\simeq  \Omega^{\infty}(ku\wedge (S^1)^{\times n})\,, \hspace{0.8cm} \Rep_n(U)\simeq \Omega^{\infty}(H\Z\wedge (S^1)^{\times n})\, .
\]
These equivalences can also be obtained by combining two results of Lawson on unitary deformation K-theory, the product formula \cite{Law06} and the Bott exact sequence \cite{Law09}. Our focus is on the equivalences on fixed points
\[
C_n(O)\simeq  \Omega^{\infty}(k\R\wedge (S^{\sigma})^{\times n})^{C_2}\,, \hspace{0.8cm} \Rep_n(O)\simeq \Omega^{\infty}(H\underline{\Z}\wedge (S^\sigma)^{\times n})^{C_2}\, ,
\]
describing the spaces of commuting orthogonal matrices for which no analogues of Lawson's results are known. Since~$(S^{\sigma})^{\times n}$ splits stably as a wedge over the representation spheres~$S^{k\cdot \sigma}$, each repeated ${n}\choose{k}$ times, the spaces~$C_n(O)$ and~$\Rep_n(O)$ decompose accordingly as products over spaces of the form~$\Omega^{\infty}(k\R\wedge S^{k\cdot \sigma})^{C_2}$ respectively~$\Omega^{\infty}(H\underline{\Z}\wedge S^{k\cdot \sigma})^{C_2}$. The homotopy groups of these infinite loop spaces are so-called~$RO(C_2)$--graded homotopy groups of ~$k\R$ and~$H\underline{\Z}$, which are known due to work of Dugger \cite{Du05} and Bruner-Greenlees \cite{BG10}. Hence, we obtain a complete computation of~$\pi_*(C_n(O))$ and~$\pi_*(\Rep_n(O))$, see Sections \ref{sec:cno} and \ref{sec:rno}.
\begin{Remark}
The~$C_2$--spaces~$\Omega^{\infty}(k\R\wedge S^{k\cdot \sigma})$ and~$\Omega^{\infty}(H\underline{\Z}\wedge S^{k\cdot \sigma})$ appearing here are~$(k\cdot \sigma)$--fold equivariant classifying spaces of the~$C_2$--infinite loop spaces~$\Z\times BU$ and~$\Z$. In particular, they satisfy
\[
\Omega^{k\cdot \sigma}(\Omega^{\infty}(k\R\wedge S^{k\cdot \sigma}))\simeq \Z\times BU\,, \hspace{0.8cm} \Omega^{k\cdot \sigma}(\Omega^{\infty}(H\underline{\Z}\wedge S^{k\cdot \sigma}))\simeq \Z\,.
\]
Since the~$C_2$--action on~$S^{k\cdot \sigma}$ is non-trivial, the fixed points of these deloopings -- which describe the factors of~$C_n(O)$ and~$\Rep_n(O)$ -- are  not themselves iterated classifying spaces of the fixed points~$\Z\times BO$ and~$\Z$. In general we know of no simple way to describe the homotopy type of the spaces~$\Omega^{\infty}(k\R\wedge S^{k\cdot \sigma})^{C_2}$ -- and hence of~$C_n(O)$ -- without passing through equivariant homotopy theory. On the other hand, the spaces~$\Omega^{\infty}(H\underline{\Z}\wedge S^{k\cdot \sigma})^{C_2}$ are products of Eilenberg-MacLane spaces, since the fixed point spectra~$(H\underline{\Z}\wedge S^{k\cdot \sigma})^{C_2}$ are modules over~$(H\underline{\Z})^{C_2}\simeq H\Z$. Hence, their homotopy type is determined by their homotopy groups, which are well-known.
\end{Remark}

We also describe a~$C_2$--equivariant version of commutative K--theory. In \cite{ACT12} Adem, Cohen and Torres-Giese observed that the spaces~$C_n(G)$ for varying~$n$ can be assembled into a simplicial space, denoted~$C_\bullet(G)$, by restricting the simplicial structure of the bar construction~$B_\bullet G$. The geometric realization of~$C_\bullet (G)$ is denoted~$B_{\textnormal{com}}G$. Commutative K--theory is the cohomology theory represented by~$B_{\textnormal{com}}G$, when~$G$ is any of the classical infinite matrix groups. The theory was introduced by Adem and G\'omez in \cite{AG15}, and further studied by Adem, G\'omez, Lind and Tillmann in \cite{AGLT17} and the first author in \cite{Gri18}. Let ~$B_{\textnormal{com}}^{\R}$ denote the geometric realization of the simplicial~$C_2$--space~$C_{\bullet}^{\R}$. Using Theorem \ref{thm:main} we prove:
\begin{Theorem} \label{thm:bcom} There is a~$C_2$--equivalence
\[ B_{\textnormal{com}}^{\R}\simeq \Omega^{\infty}(k\R\wedge \CP^{\infty})\, , \]
where $C_2$ acts on $\CP^{\infty}$ through complex conjugation.
\end{Theorem}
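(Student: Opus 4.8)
The plan is to deduce the theorem from Theorem~\ref{thm:main} by passing to geometric realizations, so the first job will be to lift the levelwise equivalences of Theorem~\ref{thm:main} to an equivalence of simplicial $C_2$-spaces. Recall that $B_{\textnormal{com}}^{\R}=|C_\bullet^{\R}|$, where $C_\bullet^{\R}$ has $n$-th space $C_n^{\R}=C_n(U)$ with simplicial operators restricted from the bar construction $B_\bullet U$. Using the identification $C_n(U)=\Hom(\Z^n,U)$ and the eigenspace decomposition of a representation, a commuting $n$-tuple in $U$ is the same as a finitely supported configuration of points of the character group $\widehat{\Z^n}\cong(S^{\sigma})^{\times n}$ labelled by finite-dimensional complex vector spaces, with complex conjugation on $U$ corresponding to simultaneous complex conjugation of the configuration points and of the labels. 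This exhibits $C_n^{\R}$ as the value at the based $C_2$-space $(S^{\sigma})^{\times n}$ of the functorial $C_2$-equivariant Segal model used in the proof of Theorem~\ref{thm:main}. Moreover, a homomorphism $\Z^m\to\Z^n$ acts on representations by pullback and on configurations by pushforward along the dual map of character groups, so with respect to the simplicial structure the assignment $[n]\mapsto(S^{\sigma})^{\times n}$ is precisely the bar construction $B_\bullet(S^{\sigma})$ of the topological abelian group $S^{\sigma}=U(1)$ with the complex-conjugation action (which is inversion); in particular $|[n]\mapsto(S^{\sigma})^{\times n}|\simeq B(S^{\sigma})=\CP^{\infty}$ with the conjugation action.

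Granting this, functoriality of the construction in the proof of Theorem~\ref{thm:main} in the based $C_2$-space promotes the equivalences of that theorem to a zig-zag of levelwise $C_2$-equivalences of proper simplicial $C_2$-spaces
\[
C_\bullet^{\R}\;\simeq\;\bigl([n]\mapsto\Omega^{\infty}(k\R\wedge(S^{\sigma})^{\times n})\bigr),
\]
the right-hand side carrying the simplicial structure induced from $B_\bullet(S^{\sigma})$. Geometric realization preserves levelwise $C_2$-equivalences of proper simplicial $C_2$-spaces (on underlying spaces and on fixed points alike), so this yields $B_{\textnormal{com}}^{\R}\simeq|[n]\mapsto\Omega^{\infty}(k\R\wedge(S^{\sigma})^{\times n})|$.

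It then remains to commute $\Omega^{\infty}(k\R\wedge-)$ past the realization. Since $\Sigma^{\infty}$ and $k\R\wedge(-)$ preserve geometric realizations of (proper) simplicial $C_2$-spaces, respectively of simplicial $C_2$-spectra, one has $|[n]\mapsto k\R\wedge(S^{\sigma})^{\times n}|\simeq k\R\wedge|[n]\mapsto(S^{\sigma})^{\times n}|\simeq k\R\wedge\CP^{\infty}$. For the outermost functor, the idea is that the diagram $[n]\mapsto\Omega^{\infty}(k\R\wedge(S^{\sigma})^{\times n})$ lifts to a simplicial object in grouplike $C_2$-equivariant $E_{\infty}$-spaces, i.e.\ in connective $C_2$-spectra via $\Omega^{\infty}$, and its realization formed there is $\Omega^{\infty}(k\R\wedge\CP^{\infty})$. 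Since the forgetful functor to $C_2$-spaces preserves geometric realizations and the realization formed in $C_2$-spaces is already grouplike — its $\underline{\pi}_0$ vanishes because the $n=0$ term $\Omega^{\infty}(k\R\wedge(S^{\sigma})^{\times 0})=\Omega^{\infty}(k\R\wedge\ast)$ is a point, so the realization is connected with connected fixed points — the two realizations agree, and $B_{\textnormal{com}}^{\R}\simeq\Omega^{\infty}(k\R\wedge\CP^{\infty})$.

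I expect the main obstacle to be the first step: one must verify that the Segal model underlying Theorem~\ref{thm:main} is genuinely functorial in the based $C_2$-space and that the identification $C_n(U)=\Hom(\Z^n,U)$ intertwines the simplicial operators of $C_\bullet(U)$ with the bar-construction operators on $(S^{\sigma})^{\times\bullet}$, so that the levelwise equivalences really do fit together into a map of simplicial $C_2$-spaces. The general failure of $\Omega^{\infty}$ to commute with geometric realization is sidestepped above via grouplikeness; the remaining points — properness of the simplicial objects involved and compatibility of geometric realization with fixed points — should be routine.
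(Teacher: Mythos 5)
Your proposal is correct in outline, but it handles the second half of the argument quite differently from the paper. The first half is essentially the paper's: the identification of $C_n^{\R}$ with labelled configurations on the character torus $(S^{\sigma})^{\times n}$, and the check that the simplicial operators of $C_\bullet^{\R}$ match the bar construction $B_\bullet S^{\sigma}$ (your acknowledged ``main obstacle''), is exactly the paper's Lemma \ref{lem:simplicial}, which is indeed a short direct check (products of commuting matrices multiply the eigenvalues on each common eigenspace); and $|B_\bullet S^{\sigma}|\simeq \CP^{\infty}$ with conjugation action is used in both arguments. Where you diverge is in how the infinite-loop machinery is moved past the realization: you realize the levelwise equivalences $C_n^{\R}\simeq \Omega^{\infty}(k\R\wedge (S^{\sigma})^{\times n})$ and then commute $\Omega^{\infty}$ with geometric realization via a grouplikeness argument, which implicitly invokes the genuine $C_2$--equivariant recognition principle (grouplike $C_2$--$E_\infty$--spaces $\simeq$ connective genuine $C_2$--spectra, with the forgetful functor preserving realizations), together with properness/Reedy-type hypotheses needed for levelwise equivalences to realize to equivalences, and naturality in $A$ of the zig-zag from Theorem \ref{thm:main}. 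The paper avoids all of this by staying at the space level: since $\varphi_\bullet$ is a simplicial \emph{homeomorphism} and the $\Gamma$--space evaluation $k\R(-)$ commutes with geometric realization on the nose, one gets $B_{\textnormal{com}}^{\R}\cong k\R(|B_\bullet S^{\sigma}|)\simeq k\R(\CP^{\infty})$ directly, and only then applies Proposition \ref{prop:veryspecial} to the single connected $C_2$--space $\CP^{\infty}$ (connected underlying space and fixed points) to identify $k\R(\CP^{\infty})\simeq \Omega^{\infty}(k\R\wedge\CP^{\infty})$. So your route is viable and more generic (it would apply whenever one only has a levelwise spectrum-level identification), but it costs substantially heavier equivariant infinite-loop-space technology and several deferred technical verifications; the paper's route is more elementary and self-contained, at the price of leaning on the specific $\Gamma$--space model. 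Your grouplikeness observation (the realization is connected with connected fixed points because the $0$--simplices form a point) is correct and is the right reason the space-level and spectrum-level realizations agree, so I see no genuine gap, only machinery you would need to cite or verify.
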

From this equivalence we deduce the homotopy groups of~$B_{\textnormal{com}}O$. Indeed, the theorem implies equivalences
\[ B_{\textnormal{com}}U\simeq \Omega^{\infty}(ku\wedge \CP^{\infty})\,, \hspace{0.8cm} B_{\textnormal{com}}O\simeq \Omega^{\infty}(k\R\wedge \CP^{\infty})^{C_2}.\]
The former was the main result of \cite{Gri18}. Since the~$C_2$--spectrum~$k\R\wedge \CP^{\infty}$ splits into the wedge over all~$k\R \wedge S^{n\cdot (1+\sigma)}$ for~$n>0$, the homotopy groups of~$B_{\textnormal{com}}O$ can again be described as a sum of~$RO(C_2)$--graded homotopy groups of~$k\R$. The splitting is not multiplicative and it is more involved to describe~$\pi_*(B_{\textnormal{com}}O)$ as a non-unital ring, which we work out in Section \ref{sec:bcomo}.

\subsection*{Acknowledgements} We thank Stefan Schwede and the anonymous referee for helpful comments. Both authors were supported by the Danish National Research Foundation through the Centre for Symmetry and Deformation (DNRF92).

\section{Proofs of the main results} \label{sec:main}
In this section we give the proofs of Theorems \ref{thm:main} and \ref{thm:bcom}. In Section \ref{sec:homgroups} we apply these theorems to compute the homotopy groups of various spaces of commuting elements and spaces of representations.

To obtain a geometric model for the~$C_2$--space~$\Omega^{\infty}(k\R\wedge (S^{\sigma})^{\times n})$ we make use of the theory of~$\Gamma$--spaces, as developed by Segal \cite{Seg74} and Bousfield-Friedlander \cite{BF78} non-equivariantly, and extended to the equivariant context by Shimakawa \cite{Shi89}, Ostermayr \cite{Ost16}, May-Merling-Osorno \cite{MMO17}, Schwede \cite[Appendix B]{Sch18} and others. We briefly recall how equivariant~$\Gamma$--spaces give rise to equivariant spectra. For a more detailed treatment we refer to the sources above.

Let~$G$ be a finite group. A~$G-\Gamma$--space is a functor~$X\co Fin_*\to (G-spaces)_*$ from finite pointed sets to based~$G$--spaces satisfying~$X(\{*\})=\{*\}$.
Every~$G-\Gamma$--space~$X$ can be extended to a functor
\[ X(-)\co (G-spaces)_*\to (G-spaces)_* \]
via the coend formula
\begin{equation} \label{eq:coend}
X(A)=\int_{k_+} X(k_+)\times A^{\times k}\,.
\end{equation}
We make this construction more explicit for Real K-theory below. The~$G$--action on~$X(A)$ is induced by the diagonal action on~$X(k_+)\times A^{\times k}$. Given another based~$G$--space~$B$, there is an assembly map~$X(A)\wedge B\to X(A\wedge B)$ induced by the diagonal maps~$B\to B^{\times k}$. In particular, every~$G-\Gamma$--space~$X$ gives rise to a genuine~orthogonal~$G$--spectrum~$X(\mathbb{S})$ in the sense of Mandell-May \cite{MM02} by setting~$X(\mathbb{S})(V)=X(S^V)$ for an orthogonal~$G$--representation~$V$, with structure maps
\[ X(\mathbb{S})(V)\wedge S^W=X(S^V)\wedge S^W\to X(S^V\wedge S^W)\cong X(S^{V\oplus W})=X(\mathbb{S})(V\oplus W) \]
given by the assembly maps above. If the~$G-\Gamma$--space~$X$ is~$G$--cofibrant and special (see \cite[Definitions B.33 and B.49]{Sch18} for a definition of these notions), the~$G$--spectrum~$X(\mathbb{S})$ is a positive~$G-\Omega$--spectrum, meaning that the adjoint structure maps
\begin{equation*} X(V)\to \Omega^W (X(V\oplus W)) \end{equation*}
are~$G$--equivalences whenever~$V^G\neq 0$. One consequence is that, if~$X$ is~$G$--cofibrant and special, the infinite loop space~$\Omega^{\infty}X(\mathbb{S})$ (which is computed as the~$0$--th term of a~$G-\Omega$--spectrum replacement of~$X(\mathbb{S})$) is~$G$--equivalent to~$\Omega X(S^1)$. More generally, the following holds:
\begin{Prop} \label{prop:special} Let~$X$ be a~$G$--cofibrant and special~$G-\Gamma$--space and~$A$ a based~$G$--CW complex. Then~$\Omega^{\infty}(X(\mathbb{S})\wedge A)$ is~$G$--equivalent to~$\Omega X(S^1\wedge A)$.
\end{Prop}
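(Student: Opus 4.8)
The plan is to reduce the statement to the known fact that a $G$-cofibrant special $G$-$\Gamma$-space produces a positive $G$-$\Omega$-spectrum, and then use the compatibility of the coend construction with smashing by a fixed $G$-CW complex $A$. First I would observe that the $G$-spectrum $X(\mathbb{S})\wedge A$ need not be built directly from a $\Gamma$-space, but it receives a natural map from the $G$-spectrum $(X\wedge A)(\mathbb{S})$ associated to the $G$-$\Gamma$-space $k_+\mapsto X(k_+)\wedge A$, via the assembly maps $X(S^V)\wedge A\to X(S^V\wedge A)$. Alternatively, and more cleanly, I would work with the prolonged functor $X(-)$ from \eqref{eq:coend} and note that for the based $G$-space $S^1\wedge A$ the coend $X(S^1\wedge A)$ is precisely the value at level $1$ of the $G$-spectrum obtained by prolonging the $G$-$\Gamma$-space $B\mapsto X(A\wedge B)$; call this $G$-$\Gamma$-space $X_A$.

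The key steps, in order, are as follows. (1) Show that $X_A = X(A\wedge -)$ is again a $G$-$\Gamma$-space, i.e. that $X(A\wedge \{*\})=\{*\}$, which is immediate, and that it is $G$-cofibrant and special whenever $X$ is and $A$ is a based $G$-CW complex. Cofibrancy is preserved because smashing a $G$-cofibrant object with a $G$-CW complex stays $G$-cofibrant, and specialness is the statement that the Segal maps $X_A(k_+)\to X_A(1_+)^{\times k}$ are $G$-equivalences; since $X_A(k_+)=X(A^{\vee k})$ and the Segal map is obtained by prolonging the Segal maps of $X$ along the (cofibrant) space $A$, this follows from the fact that prolongation of a special $G$-$\Gamma$-space along a $G$-CW complex preserves the relevant equivalences (this is part of the equivariant $\Gamma$-space machinery in \cite[Appendix B]{Sch18}). (2) Identify the $G$-spectrum $X_A(\mathbb{S})$ with $X(\mathbb{S})\wedge A$ up to stable equivalence: both are "$A$-smashed" versions of $X(\mathbb{S})$, and the assembly maps provide a natural comparison map $X(\mathbb{S})\wedge A\to X_A(\mathbb{S})$ which one checks is a $\underline{\pi}_*$-isomorphism using that $A$ is a finite-type $G$-CW complex and that smashing with $A$ commutes with the filtered colimits and cofiber sequences defining $\underline{\pi}_*$; since $\Omega^\infty$ only depends on the stable equivalence type, this suffices. (Should $A$ be infinite-dimensional, one passes to the colimit over skeleta, using that $\Omega^\infty$ commutes with the relevant filtered colimits of spectra after fibrant replacement, or simply restricts attention to the finite-type case, which is all that is needed for $S^\sigma$, $(S^\sigma)^{\times n}$ and $\CP^\infty$ in the applications—though $\CP^\infty$ does require the colimit argument.) (3) Apply the cited consequence of $X_A$ being $G$-cofibrant and special: $\Omega^\infty(X_A(\mathbb{S}))$ is $G$-equivalent to $\Omega(X_A(S^1)) = \Omega(X(A\wedge S^1)) = \Omega(X(S^1\wedge A))$. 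Combining (2) and (3) gives $\Omega^\infty(X(\mathbb{S})\wedge A)\simeq \Omega X(S^1\wedge A)$ as claimed.

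I expect the main obstacle to be step (1), specifically verifying that $X_A$ remains special, and its companion in step (2), the comparison $X(\mathbb{S})\wedge A \simeq X_A(\mathbb{S})$. The subtlety is that specialness is an equivariant condition requiring the Segal maps to be equivalences on \emph{all} fixed-point spaces simultaneously, and one must be careful that prolonging along $A$—which is built from induced cells $G/H_+\wedge D^k$—interacts correctly with fixed points; the relevant input is that the prolongation functor $X(-)$ sends $G$-cofibrations to $G$-cofibrations and $G$-equivalences between $G$-cofibrant objects to $G$-equivalences (a "Brown-type" statement), so that the Segal equivalences for $X$, after smashing with the cells of $A$ and gluing, remain equivalences for $X_A$. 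All of this is contained in the equivariant $\Gamma$-space formalism we are citing; the role of the proof here is mainly to assemble these facts, so I would keep it brief and point to \cite[Appendix B]{Sch18}, \cite{Shi89} and \cite{Ost16} for the technical underpinnings.
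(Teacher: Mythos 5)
Your proposal follows essentially the same route as the paper: introduce the prolonged $G$--$\Gamma$--space $X_A(k_+)=X(k_+\wedge A)$, check that it is again $G$--cofibrant and special, identify $X(\mathbb{S})\wedge A$ with $X_A(\mathbb{S})$ via the assembly map (the paper simply cites Blumberg for this stable equivalence), and then apply $\Omega^{\infty}X_A(\mathbb{S})\simeq \Omega X_A(S^1)=\Omega X(S^1\wedge A)$, reducing infinite $A$ to the finite case by a filtered-colimit argument. The only differences are bookkeeping: the paper routes the cofibrancy/specialness of $X_A$ through realizations of finite $G$--simplicial sets and takes the colimit over finite $G$--subcomplexes (using that $X(-)$ preserves closed inclusions and a compactness argument) rather than over skeleta, but these are minor.
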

\begin{proof} We first assume that~$A$ has finitely many cells and let~$X_A$ denote the prolonged~$G-\Gamma$--space sending a finite set~$k_+$ to the evaluation~$X(k_+\wedge A)$.
Then the assembly maps induce a map of orthogonal~$G$--spectra
\[ X(\mathbb{S})\wedge A\to X_A(\mathbb{S})\,, \]
which is a~$G$--stable equivalence by \cite[Proposition 3.6]{Blu06}. In particular, there is a~$G$--equivalence $\Omega^{\infty}(X(\mathbb{S})\wedge A)\simeq \Omega^{\infty}(X_A(\mathbb{S}))$. Now, if $A$ is $G$--homeomorphic to the geometric realization of a finite based $G$--simplicial set, then by \cite[Propositions B.37(ii) and B.54(iii)]{Sch18}, the prolonged~$G-\Gamma$--space~$X_A$ is again~$G$--cofibrant and special. Thus in that case we have
\[ \Omega X(S^1\wedge A)\cong \Omega X_A(S^1)\simeq \Omega^{\infty}X_A(\mathbb{S})\simeq \Omega^{\infty}(X(\mathbb{S})\wedge A)\,. \]
Since every finite based $G$--CW complex is $G$--homotopy equivalent to the geometric realization of a finite based $G$--simplicial set (see \cite[Proposition B.46(ii)]{Sch18}) and all terms in the above chain of equivalences are homotopy-invariant, the statement follows for all finite $A$.

To reduce the infinite case to the finite one, we write an arbitrary $G$--CW complex as the colimit over its finite $G$--subcomplexes and use that $X(-)$ commutes with filtered colimits along closed inclusions (the fact that $X(-)$ takes closed inclusions to closed inclusions is \cite[Proposition B.26 (ii)]{Sch18}). The poset of finite $G$--subcomplexes of $A$ satisfies the conditions of \cite[Proposition A.15]{Sch18}, and hence every map from a compact space to the colimit $X(A)$ factors through $X(A')$ for some finite $A'\subset A$. Applying this more generally to the evaluations~$X(S^V\wedge A)$,
one derives that also for infinite~$A$ the assembly map $X(\mathbb{S})\wedge A\to X_A(\mathbb{S})$ is a $G$--stable equivalence and the~$G$--spectrum~$X_A(\mathbb{S})$ is a positive~$G-\Omega$--spectrum. The statement follows.
\end{proof}
Another consequence of~$X$ being~$G$--cofibrant and special is that the natural map~$X(A\vee A)\to X(A)\times X(A)$ is a~$G$--equivalence for all based~$G$--CW-complexes~$A$ (see \cite[Proposition B.54]{Sch18} for finite~$A$, the infinite case again follows from both sides commuting with filtered colimits along closed inclusions, cf. the proof of Proposition \ref{prop:special} above). In particular, for every subgroup~$H\leq G$ this defines an abelian monoid structure on the component sets~$\pi_0(X(A)^H)$ via the composite
\[ \pi_0(X(A)^H)\times \pi_0(X(A)^H)\xleftarrow{\cong} \pi_0(X(A\vee A)^H) \xrightarrow{\text{fold}_*} \pi_0(X(A)^H)\,. \]
The following holds:
\begin{Prop} \label{prop:veryspecial} Let~$X$ be a~$G$--cofibrant and special~$G-\Gamma$--space and~$A$ a based~$G$--CW complex such that~$\pi_0(X(A)^H)$ is an abelian group for all subgroups~$H$ of~$G$. Then the map~$X(A)\to \Omega X(S^1\wedge A)$ is a~$G$--equivalence and hence~$\Omega^{\infty}(X(\mathbb{S})\wedge A)$ is~$G$--equivalent to~$X(A)$.
\end{Prop}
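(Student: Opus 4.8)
The plan is to bootstrap from Proposition~\ref{prop:special} and the classical group-completion theorem for $\Gamma$-spaces. Since Proposition~\ref{prop:special} already gives a $G$-equivalence $\Omega^{\infty}(X(\mathbb{S})\wedge A)\simeq \Omega X(S^1\wedge A)$, the whole proposition reduces to showing that the natural map $X(A)\to \Omega X(S^1\wedge A)$, adjoint to the assembly map $X(A)\wedge S^1\to X(S^1\wedge A)$, is a $G$-equivalence; the remaining claim is then immediate. As in the proof of Proposition~\ref{prop:special}, I would work with the prolonged $G$-$\Gamma$-space $X_A\co k_+\mapsto X(k_+\wedge A)$, which for finite $A$ is again $G$-cofibrant and special. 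Under the identifications $X_A(S^0)=X(A)$ and $X_A(S^1)\simeq X(S^1\wedge A)$ used there, the map in question is precisely the level-$0$ adjoint structure map $X_A(S^0)\to \Omega X_A(S^1)$ of the orthogonal $G$-spectrum $X_A(\mathbb{S})$, which the specialness of $X_A$ alone does \emph{not} control.

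Next I would observe that the hypothesis on $\pi_0$ is exactly the statement that $X_A$ is \emph{grouplike}: for every subgroup $H\leq G$ the abelian monoid $\pi_0(X_A(S^0)^H)=\pi_0(X(A)^H)$, with the addition induced by specialness recalled before the statement, is a group. The remaining claim is then the equivariant refinement of Segal's theorem: a $G$-cofibrant special grouplike $G$-$\Gamma$-space $Y$ is \emph{very special}, so that $Y(\mathbb{S})$ is a genuine (not merely positive) $G$-$\Omega$-spectrum, and in particular $Y(S^0)\to \Omega Y(S^1)$ is a $G$-equivalence. This can be quoted from the $\Gamma$-space literature already cited in Section~\ref{sec:main} (e.g.~\cite{Shi89, MMO17, Sch18}); I sketch the reduction to the non-equivariant statement next.

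To prove the claim one checks it on $H$-fixed points for each $H\leq G$. Consider the ordinary $\Gamma$-space $Y^H\co k_+\mapsto Y(k_+)^H$. Applying $(-)^H$ to the specialness maps $Y(k_+)\to Y(1_+)^{\times k}$ shows $Y^H$ is special, the $G$-cofibrancy of $Y$ makes $Y^H$ cofibrant as an ordinary $\Gamma$-space, and $\pi_0(Y^H(S^0))=\pi_0(Y(S^0)^H)$ is a group by hypothesis. The key point is that the prolongation of $Y^H$ at $S^1$ computes $Y(S^1)^H$: since $S^1$ carries the trivial $G$-action, the coend~\eqref{eq:coend} defining $Y(S^1)$ is built from products of the $G$-spaces $Y(k_+)$ with trivially-acted spaces $(S^1)^{\times k}$, together with quotients and filtered colimits along closed inclusions, all of which commute with $(-)^H$; hence $Y(S^1)^H\cong Y^H(S^1)$, compatibly with the structure maps. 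The classical group-completion theorem (Segal~\cite{Seg74}, Bousfield--Friedlander~\cite{BF78}) now gives that $Y^H(S^0)\to \Omega Y^H(S^1)$ is a weak equivalence for every $H$. Reassembling over all $H$ yields the $G$-equivalence $Y(S^0)\to \Omega Y(S^1)$; applied to $Y=X_A$ and combined with Proposition~\ref{prop:special} this gives $X(A)\simeq \Omega X(S^1\wedge A)\simeq \Omega^{\infty}(X(\mathbb{S})\wedge A)$. The case of infinite $A$ is reduced to the finite one exactly as in the proof of Proposition~\ref{prop:special}, by writing $A$ as the filtered colimit of its finite $G$-subcomplexes and using that $X(-)$ and $(-)^H$ both commute with such colimits.

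The main obstacle I anticipate is the point-set comparison in the third step: verifying that taking $H$-fixed points genuinely interchanges with prolonging a $G$-$\Gamma$-space at a space with trivial $G$-action, so that $Y(S^1)^H$ is the honest $\Gamma$-space prolongation of $Y^H$ and not something larger, and that this identification is natural in the structure maps. Once this is settled the argument is formal: the equivariant statement is stitched together from the non-equivariant group-completion theorem applied subgroup by subgroup. A reader willing to take the equivariant Segal theorem as a black box, in the form stated in the references cited in Section~\ref{sec:main}, can skip the fixed-point reduction entirely.
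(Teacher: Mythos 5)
Your proposal follows essentially the same route as the paper: reduce to finite $A$, observe that the prolonged $G$--$\Gamma$--space $X_A$ is $G$--cofibrant, special, and grouplike by the $\pi_0$--hypothesis, and invoke the equivariant group-completion theorem, which the paper simply quotes as \cite[Theorem B.61]{Sch18}. Your supplementary sketch deducing that theorem from the non-equivariant one subgroup by subgroup is sound precisely because $S^0$ and $S^1$ carry trivial $G$--action (the identification $X_A(S^1)^H\cong (X_A)^H(S^1)$ follows from evaluation commuting with geometric realization of the simplicial circle and fixed points commuting with that realization), but it is added detail rather than a different approach.
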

\begin{proof} Again we can reduce to finite~$A$, where the statement is \cite[Theorem B.61]{Sch18}.
\end{proof}
The condition that the~$\pi_0(X(A)^H)$ are groups is automatic if all the fixed point spaces~$A^H$ are path-connected, since this implies that all~$X(A)^H$ are also path-connected. One way to see this is to use that such an $A$ is $G$-homotopy equivalent to the geometric realization of a $G$-simplicial set $A_{\bullet}'$ with a single $0$-simplex, and hence $X(A)$ is $G$-homotopy equivalent to the realization of the $G$-simplicial space $X(A_{\bullet}')$ (this makes use of the fact that evaluation $X(-)$ commutes with geometric realizations, cf. \cite[Proposition B.29]{Sch18}). This $G$-simplicial space again has a single $0$-simplex, and it follows that all the fixed points of its realization are path-connected, as desired.

Finally we note that if a~$G-\Gamma$--space~$X$ is~$G$--cofibrant and special, then all its fixed point~$\Gamma$--spaces~$X^H$ for~$H\subset G$ are cofibrant and special in the non-equivariant sense, and the realization~$X^H(\mathbb{S})$ is stably equivalent to the derived categorical fixed point spectrum~$X(\mathbb{S})^H$. 

Now we turn to the case~$G=C_2$ and Real K-theory. Let~$k\R$ denote the~$C_2-\Gamma$--space which assigns to a finite pointed set~$k_+$ the space of~$k$-tuples~$(V_1,\hdots,V_k)$ of pairwise orthogonal finite dimensional complex subspaces of~$\C^{\infty}$, with~$C_2$--action by complex conjugation on~$\C^{\infty}$. Given a map of finite pointed sets~$\alpha\co k_+\to l_+$, the induced map~$k\R(\alpha)$ sends a $k$-tuple~$(V_1,\hdots,V_k)$ to the $l$--tuple~$(\bigoplus_{i\in \alpha^{-1}(1)} V_i,\hdots,\bigoplus_{i\in \alpha^{-1}(l)} V_i)$. To make the topology on~$k\R(k_+)$ precise, we write 
\[ k\R(k_+)=\bigsqcup_{n_1,\hdots,n_k\in \N} \left(L(\C^{n_1}\oplus \cdots \oplus \C^{n_k},\C^{\infty})/\left(U(n_1)\times\hdots\times U(n_k)\right)\right). \]
Here,~$L(V,W)$ denotes the space of complex linear isometric embeddings between two complex inner product spaces, of which we assume~$V$ to be finite dimensional. If~$W$ is also finite dimensional and without loss of generality~$\dim W\geq \dim V$, we choose a linear isometric embedding~$f\co V\hookrightarrow W$ to identify~$L(V,W)$ with~$U(W)/U(\text{im}(f)^{\perp}))$ and use this to define the topology on~$L(V,W)$. One can show that this topology does not depend on the chosen~$f$. For infinite~$W$, we give~$L(V,W)$ the colimit topology over all~$L(V,W')$ with~$W'\subset W$ a finite dimensional subspace.

Note that the space~$k\R(1_+)$ is the disjoint union over all Grassmanians~$\text{Gr}_n$ of~$n$--dimensional complex subspaces with conjugation action, which is a classifying space for~$n$--dimensional Real vector bundles in the sense of Atiyah \cite{Ati66}. The underlying~$\Gamma$--space of~$k\R$ is precisely Segal's model for connective complex K-theory \cite{Seg77}. Moreover, a configuration~$(V_1,\hdots,V_k)$ is fixed under the conjugation action if and only if each~$V_i$ is the complexification of a real subspace~$V_i'$ of~$\R^{\infty}$. Hence, one finds that the fixed points~$\Gamma$--space~$k\R^{C_2}$ is isomorphic to the~$\Gamma$--space of tuples of pairwise orthogonal finite dimensional real subspaces of~$\R^{\infty}$, i.e., Segal's model for connective orthogonal K-theory.

The realization~$k\R(\mathbb{S})$ which, by abuse of notation, we denote again by~$k\R$, is a model for connective Real K-theory.  Since~$k\R$ is a~$C_2$--cofibrant special~$C_2-\Gamma$--space (see below), the previous discussion shows that the underlying spectrum of~$k\R$ is connective unitary K-theory~$ku$, and the derived categorical fixed point spectrum~$k\R^{C_2}$ is given by connective orthogonal K-theory~$ko$.

\begin{Lemma} \label{lem:special} The~$C_2-\Gamma$--space~$k\R$ is~$C_2$--cofibrant and special.
\end{Lemma}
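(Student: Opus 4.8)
The plan is to verify the two conditions separately, unpacking what cofibrancy and specialness mean for this particular $C_2$-$\Gamma$-space and then checking them by hand.

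For specialness, I need the Segal maps $k\R(k_+)\to k\R(1_+)^{\times k}$ to be $C_2$-equivalences, i.e. weak equivalences on underlying spaces and on $C_2$-fixed points. On underlying spaces this is the classical statement that Segal's model for $ku$ is special, which one proves by noting that $L(\C^{n_1}\oplus\cdots\oplus\C^{n_k},\C^\infty)$ is contractible (the space of isometric embeddings of a fixed finite-dimensional space into $\C^\infty$ is contractible), so each summand of $k\R(k_+)$ is a classifying space $BU(n_1)\times\cdots\times BU(n_k)$, and the Segal map is visibly a homeomorphism onto the corresponding component of $k\R(1_+)^{\times k}$. For the fixed points, I would argue that $k\R(k_+)^{C_2}$ is the analogous configuration space of tuples of pairwise orthogonal real subspaces of $\R^\infty$ (as the excerpt already observes), and the same contractibility argument applies to $L_{\R}(\R^{n_1}\oplus\cdots\oplus\R^{n_k},\R^\infty)$; hence $k\R(k_+)^{C_2}$ has the homotopy type of $\bigsqcup BO(n_1)\times\cdots\times BO(n_k)$ and the fixed-point Segal map is again an equivalence. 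In fact it is cleanest to observe directly that the underlying $\Gamma$-space and the fixed-point $\Gamma$-space are both Segal's construction (for $ku$ and $ko$ respectively), each of which is known to be special, which gives specialness as a $C_2$-$\Gamma$-space.

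For $C_2$-cofibrancy, following \cite[Definition B.33]{Sch18} I need to check that for every morphism $\alpha\co k_+\to l_+$ in $Fin_*$, the induced map $k\R(\alpha)$ is a closed $C_2$-cofibration in an appropriate sense, together with the analogous latching-type conditions; concretely it suffices to show that the spaces $k\R(k_+)$ are built as $C_2$-CW complexes compatibly with the structure maps, or equivalently that each $k\R(k_+)$ is $C_2$-homotopy equivalent to a $C_2$-CW complex and the maps $k\R(\alpha)$ are $C_2$-cofibrations. The key input is that $L(V,W)$, with $V$ finite-dimensional and the conjugation $C_2$-action, is a $C_2$-CW complex: for finite-dimensional $W$ it is identified with a homogeneous space $U(W)/U(\mathrm{im}(f)^\perp)$ with $f$ chosen $C_2$-equivariantly (take $f$ to be the complexification of a real embedding), so it is a smooth closed $C_2$-manifold and hence $C_2$-triangulable; the quotient by the free action of $U(n_1)\times\cdots\times U(n_k)$ commuting with the conjugation is then again a $C_2$-manifold, and the colimit over finite-dimensional $W\subset\C^\infty$ is a $C_2$-CW complex built out of these. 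The structure maps $k\R(\alpha)$ are direct-sum maps on subspaces, which are closed embeddings and are readily checked to be $C_2$-cofibrations.

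The main obstacle is the cofibrancy verification: specialness reduces quickly to the known non-equivariant facts about Segal's model applied to the underlying and fixed-point $\Gamma$-spaces, but confirming that $k\R$ meets the precise cofibrancy hypotheses of \cite{Sch18} requires carefully matching our explicit point-set model for $k\R(k_+)$ and the topology we put on the embedding spaces $L(V,W)$ against Schwede's definition, and checking that the colimit topology over finite-dimensional subspaces of $\C^\infty$ interacts well with the $C_2$-action. I expect that the honest route is to exhibit an equivariant cellular filtration of each $L(V,W)$ (or to cite that homogeneous spaces of compact Lie groups admit equivariant CW structures) and then to observe that all the structure maps are inclusions of subcomplexes, so that the required cofibrancy is visible at the level of $C_2$-CW complexes; once this is set up, the latching object conditions are straightforward combinatorial checks of the kind carried out for the non-equivariant Segal $\Gamma$-space.
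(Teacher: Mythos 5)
Your argument for specialness checks the wrong condition, and this is a genuine gap rather than a stylistic difference. For a $C_2$--$\Gamma$--space, ``special'' in the sense needed here (\cite[Definition B.49]{Sch18}, which is what feeds into Propositions \ref{prop:special} and \ref{prop:veryspecial} and the positive $G$--$\Omega$--spectrum property) does \emph{not} mean that the Segal maps $P_k\co k\R(k_+)\to k\R(1_+)^{\times k}$ are equivalences on underlying spaces and on $C_2$--fixed points. It means that $P_k$, which is $(C_2\times\Sigma_k)$--equivariant, is a weak equivalence on the fixed points of \emph{all graph subgroups} $\{(x,\alpha(x))\mid x\in C_2\}$ of $C_2\times\Sigma_k$, for every homomorphism $\alpha\co C_2\to\Sigma_k$. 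Your reduction ``underlying $\Gamma$--space is Segal's model for $ku$, fixed--point $\Gamma$--space is Segal's model for $ko$, both are special, done'' only covers the trivial homomorphism $\alpha$ and the trivial subgroup; it says nothing about the twisted graph subgroups. Already for $k=2$ and $\alpha$ the nontrivial map $C_2\to\Sigma_2$, the relevant fixed points of $k\R(2_+)$ are pairs $(V,\overline{V})$ with $V\perp\overline{V}$, mapping to the twisted--diagonal fixed points of $k\R(1_+)^{\times 2}$, which are all pairs $(V,\overline{V})$ with no orthogonality condition; showing this map is an equivalence is a new statement not expressible in terms of $ku$ or $ko$. The paper handles exactly this by producing a $(C_2\times\Sigma_k)$--equivariant map in the other direction from a $C_2$--equivariant unitary embedding $\phi\co\C^k\otimes_{\C}\C^{\infty}\hookrightarrow\C^{\infty}$ and showing the two maps are inverse homotopy equivalences on all graph--subgroup fixed points, the key input being that $L(V,\C^{\infty})$ is $(C_2\ltimes U(V))$--cofibrant with contractible $H$--fixed points for all subgroups $H$ meeting $1\times U(V)$ trivially (following the argument for $ku_G$ in \cite[Theorem 6.3.19]{Sch18}).

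The same issue infects your cofibrancy sketch: $C_2$--cofibrancy of a $C_2$--$\Gamma$--space in the sense of \cite[Definition B.33]{Sch18} is again a condition involving the $\Sigma_k$--actions (latching maps being cofibrations for the appropriate families of graph subgroups of $C_2\times\Sigma_k$), so exhibiting each $k\R(k_+)$ as a $C_2$--CW complex with the structure maps as $C_2$--cofibrations does not by itself verify the definition. (A smaller inaccuracy: complex conjugation does not commute with the $U(n_1)\times\cdots\times U(n_k)$--action on the embedding spaces, it normalizes it, so the relevant equivariance is for the semidirect product $C_2\ltimes(U(n_1)\times\cdots\times U(n_k))$.) The paper deliberately routes both cofibrancy and specialness through the equivariant cofibrancy and fixed--point contractibility of the linear isometries spaces, citing \cite[Example 6.3.16 and Theorem 6.3.19]{Sch18}; to repair your proof you would need to engage with the graph--subgroup conditions rather than only the underlying and $C_2$--fixed--point data.
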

\begin{proof} To show that~$k\R$ is special, one needs to consider the maps
\[ P_k\co  k\R(k_+)\xrightarrow{\prod p_i} k\R(1_+)^{\times k} \]
induced by the projections~$p_i\co k_+\to 1_+$ sending~$i$ to~$1$ and all other points to the basepoint (see \cite[Proposition B.52]{Sch18}). Concretely, the map~$P_k$ sends a~$k$-tuple of finite dimensional pairwise orthogonal subspaces of~$\C^{\infty}$ to the same tuple, forgetting the fact that the subspaces were mutually orthogonal. This map~$P_k$ is~$(C_2\times \Sigma_k)$--equivariant. The spectrum~$k\R$ being special is equivalent to this map being a weak equivalence on underlying spaces and on all fixed points of graph subgroups~$\{(x,\alpha(x))\ |\ x\in C_2\}$ for group homomorphisms~$\alpha:C_2\to \Sigma_k$, for all~$k\geq 2$. Choosing a~$C_2$--equivariant unitary embedding~$\phi\co \C^k\otimes_{\C}\C^{\infty}\hookrightarrow \C^{\infty}$ induces a~$(C_2\times \Sigma_k)$--equivariant map in the other direction by sending a tuple~$(V_1,\hdots,V_k)$ to the pairwise orthogonal tuple~$(\phi(e_1\otimes V_1),\hdots,\phi(e_k\otimes V_k))$, where~$e_i$ is the standard~$i$--th basis vector of~$\C^k$. Here,~$C_2$ acts on~$\C^k\otimes_{\C}\C^{\infty}$ by complex conjugation on~$\C^{\infty}$ and trivially on~$\C^k$. After taking fixed points for a graph subgroup, these two maps become inverse homotopy equivalences. The proof for this claim is completely analogous to the one for the~$G-\Gamma$--space~$ku_G$ modeling connective equivariant~K-theory, which is discussed in detail in \cite[Theorem 6.3.19]{Sch18}, so we refrain from giving it here. It reduces to the fact that the spaces~$L(V,\C^{\infty})$ are~$(C_2\ltimes U(V))$--cofibrant with contractible fixed points~$L(V,\C^{\infty})^H$ for all subgroups~$H\subset C_2\ltimes U(V)$ having trivial intersection with~$1\times U(V)$. Likewise, the proof that~$k\R$ is~$C_2$--cofibrant is analogous to \cite[Example 6.3.16]{Sch18}.
\end{proof}
Let~$A$ be a based~$C_2$--space. By definition, a point in~$k\R(A)$ is represented by an element of~$k\R(k_+)\times A^{\times k}$, i.e, by a tuple~$(V_1,\hdots,V_k;x_1,\hdots,x_k)$ where the~$x_i$ are points in~$A$ and the~$V_i$ are pairwise orthogonal finite dimensional complex subspaces of~$\mathbb{C}^{\infty}$. The non-trivial element~$\tau\in C_2$ acts via 
\[ \tau\cdot (V_1,\hdots,V_k;x_1,\hdots,x_k)=(\overline{V_1},\hdots,\overline{V_k};\tau\cdot x_1,\hdots,\tau\cdot x_k). \]
One can think of~$(V_1,\hdots,V_k;x_1,\hdots,x_k)$ as a labeled configuration of points in~$A$, with~$V_i$ being the label for~$x_i$. The equivalence relation on these points encoded in the coend formula (\ref{eq:coend}) is generated by
\begin{itemize}
\item~$(V_1,\hdots,V_k;x_1,\hdots,x_k)\sim (V_{\sigma(1)},\hdots,V_{\sigma(k)};x_{\sigma(1)},\hdots,x_{\sigma(k)})$ for every permutation~$\sigma\in \Sigma_k$,
\item~$(V_1,\hdots,V_k;x_1,\hdots ,x_{k-1},*)\sim (V_1,\hdots,V_{k-1};x_1,\hdots,x_{k-1})$ for~$*$ the basepoint in~$A$,
\item~$(V_1,\hdots,V_{k-1},0;x_1,\hdots,x_k)\sim (V_1,\hdots,V_{k-1};x_1,\hdots,x_{k-1})$, and
\item~$(V_1,\hdots,V_k;x_1,\hdots,x_k)\sim (V_1,\hdots,V_{k-1}\oplus V_k;x_1,\hdots,x_{k-1})$ if~$x_{k-1}=x_k$.
\end{itemize}
In words, the first relation says that the labeled configurations are unordered, the second that labels on the basepoint vanish, the third that a point labeled~$0$ can be left out and the last that if two points collide, their labels are added up.

Now we consider the case where~$A=(S^{\sigma})^{\times n}$ with~$S^{\sigma}$ the unit sphere in~$\C$, based at~$1$. So each point~$x_i$ in a labeled configuration~$x=(V_1,\hdots,V_k;x_1,\hdots,x_k)$ has components~$x_i^{(1)},x_i^{(2)},\hdots,x_i^{(n)} \in S^{\sigma}$. For~$j=1,\hdots,n$ we write~$A_j(x)\in U$ for the unique unitary matrix which acts by multiplication with~$x_i^{(j)}$ on each~$V_i$ and equals the identity on the orthogonal complement of the sum of the~$V_i$. Note that for any~$j,j'$ the matrices~$A_j(x)$ and~$A_{j'}(x)$ commute, since they are simultaneously diagonalisable by definition. Furthermore, the~$A_j(x)$ do not depend on the chosen representative for the equivalence class~$[x]$, because one easily checks that~$A_j(-)$ respects the four types of relations above.  Thus we obtain a map
\begin{equation} \label{eq:varphi} \varphi_n\co k\R((S^{\sigma})^{\times n}) \to C_n^{\R}\end{equation}
sending~$[x]=[(V_1,\hdots,V_k;x_1,\hdots,x_k)]$ to the tuple~$(A_1(x),\hdots,A_n(x))\in U^{\times n}$.

\begin{Prop} The map~$\varphi_n$ is a~$C_2$--equivariant homeomorphism.
\end{Prop}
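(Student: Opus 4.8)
The plan is to verify in turn that $\varphi_n$ is a well-defined continuous $C_2$--map, that it is bijective, and finally — where the real work lies — that it is a homeomorphism, by a compactness argument exploiting the colimit structure of both sides.

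That $\varphi_n$ is $C_2$--equivariant is immediate from the formulas: for $x=(V_1,\dots,V_k;x_1,\dots,x_k)$ one has $\tau\cdot x=(\overline{V_1},\dots,\overline{V_k};\tau x_1,\dots,\tau x_k)$, and the unitary acting by $\overline{x_i^{(j)}}$ on $\overline{V_i}$ and trivially on the orthogonal complement of the sum is exactly $\overline{A_j(x)}$, which is how $\tau$ acts on $C_n^{\R}$. For continuity, write $A_j(x)=\id+\sum_{i=1}^k (x_i^{(j)}-1)\,P_{V_i}$, where $P_{V_i}$ denotes orthogonal projection onto $V_i$; this depends continuously on the labelled configuration, and since $k\R((S^{\sigma})^{\times n})$ carries the quotient topology of $\bigsqcup_k k\R(k_+)\times ((S^{\sigma})^{\times n})^{\times k}$ while $C_n^{\R}\subseteq U^{\times n}$ with $U=\colim_N U(N)$ carrying the colimit topology, it follows that $\varphi_n$ is continuous.

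Bijectivity I would deduce from simultaneous diagonalisation. Given a commuting tuple $(A_1,\dots,A_n)$ in $U$, all $A_j$ preserve some $\C^N$ and restrict to the identity on its complement; as commuting unitaries they are simultaneously diagonalisable, so $\C^N$ is the orthogonal sum of their joint eigenspaces $W_{\lambda}$, $\lambda\in (S^1)^{\times n}$. Letting the $V_i$ be those $W_{\lambda}\neq 0$ with $\lambda\neq (1,\dots,1)$, labelled by the $x_i=\lambda$ regarded as points of $(S^{\sigma})^{\times n}$, produces a preimage of $(A_1,\dots,A_n)$ under $\varphi_n$; this gives surjectivity. For injectivity, use the four relations of the coend (\ref{eq:coend}) to normalise a representative so that the labels $x_i$ are pairwise distinct and not the basepoint and all $V_i\neq 0$. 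A short computation then identifies the $V_i$ with the non-trivial joint eigenspaces of $\varphi_n([x])$ and the $x_i$ with the corresponding eigenvalue tuples, so the normalised representative — hence $[x]$ itself — is recovered from $\varphi_n([x])$.

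The main obstacle is upgrading the continuous bijection $\varphi_n$ to a homeomorphism: there is no evident continuous inverse, because joint eigenspaces do not vary continuously as eigenvalues collide. I would instead show that $\varphi_n$ is a closed map. Set $F_N=\varphi_n^{-1}(C_n(U(N)))$. The eigenspace analysis above shows that $F_N$ is precisely the set of classes admitting a representative with all $V_i\subseteq \C^N$: if every $A_j(x)$ lies in $U(N)$, then the non-trivial joint eigenspaces of $\varphi_n([x])$ lie in $\C^N$. Consequently $F_N$ is the continuous image of the compact space
\[ \bigsqcup_{\substack{k\geq 0,\ n_1,\dots,n_k\geq 1\\ n_1+\cdots+n_k\leq N}}\Big(L(\C^{n_1}\oplus\cdots\oplus\C^{n_k},\C^N)/(U(n_1)\times\cdots\times U(n_k))\Big)\times ((S^{\sigma})^{\times n})^{\times k}, \]
hence is itself compact. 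Since $C_n^{\R}$ has underlying space $\colim_N C_n(U(N))$ with the colimit topology and each $C_n(U(N))$ is compact Hausdorff, it is enough to check, for a closed subset $C\subseteq k\R((S^{\sigma})^{\times n})$, that $\varphi_n(C)\cap C_n(U(N))$ is closed in $C_n(U(N))$ for every $N$; and indeed $\varphi_n(C)\cap C_n(U(N))=\varphi_n(C\cap F_N)$ is the continuous image of the compact set $C\cap F_N$, so it is compact and therefore closed. Thus $\varphi_n$ is a continuous closed bijection, that is, a $C_2$--equivariant homeomorphism. The only genuinely delicate point is the point-set bookkeeping that identifies $F_N$ with the classes supported on $\C^N$ and exhibits it as a continuous image of a compact space.
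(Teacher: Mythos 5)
Your proof is correct and follows essentially the same route as the paper: equivariance via conjugation of eigenvalues, continuity by unwinding the quotient and colimit topologies, bijectivity via simultaneous diagonalisation into joint eigenspaces, and the homeomorphism step via compactness of the classes supported in~$\C^N$ together with Hausdorffness of~$C_n(U(N))$. The only cosmetic difference is that you deduce directly that~$\varphi_n$ is a closed map, whereas the paper shows that each restriction~$(\varphi_n)_{\leq m}\co k\R((S^{\sigma})^{\times n})_{\leq m}\to C_n(U(m))$ is a homeomorphism and then passes to colimits --- the same compactness argument either way.
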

\begin{proof} Equivariance follows from the fact that if a unitary matrix~$A$ acts by multiplication with the scalar~$\lambda$ on~$V$, then its conjugate~$\overline{A}$ acts by multiplication with~$\overline{\lambda}$ on~$\overline{V}$.

To see that~$\varphi_n$ is bijective, we construct an inverse: Let~$A=(A_1,\hdots,A_n)$ be an~$n$--tuple of commuting unitary matrices. Since the~$A_i$ are diagonalisable and commute pairwise, there exist pairwise orthogonal finite dimensional subspaces~$V_1(A),\hdots,V_k(A)$ of~$\C^\infty$ such that each~$A_j$ acts by multiplication with a scalar~$\lambda_i^{(j)}$ on~$V_i(A)$, and all the~$A_j$ act trivially on the orthogonal complement of the direct sum of the~$V_i(A)$. Furthermore, there is a coarsest such decomposition, unique up to permutation, which is characterized by the property that for all~$i\neq i'$ there exists a~$j$ such that~$\lambda_i^{(j)}\neq \lambda_{i'}^{(j)}$. Let~$\psi(A)$ be defined as the class of the tuple~$(V_1(A),\hdots,V_k(A);\lambda_1,\hdots,\lambda_k)$ for this minimal decomposition. By construction, we have~$\varphi_n(\psi(A))=A$. To check the other direction, let~$[x]=[(V_1,\hdots,V_k;x_1,\hdots,x_k)]$ be an element of~$k\R((S^{\sigma})^{\times n})$. Without loss of generality, we can assume that the~$x_i$ are pairwise different non-basepoint elements of~$(S^{\sigma})^{\times n}$ and that all~$V_i$ are non-trivial. Then, by definition, each~$A_j(x)$ acts by scalar multiplication with~$x_i^{(j)}$ on~$V_i$ and for each~$i\neq i'$ there exists a~$j$ such that~$x_i^{(j)}\neq x_{i'}^{(j)}$. In other words, the~$V_i$ satisfy the defining property of the decomposition~$V_i(\varphi_n([x]))$. It follows that~$\psi(\varphi_n([x]))=[x]$ and so~$\varphi_n$ is bijective.

We now show that~$\varphi_n$ is continuous. It is enough to check that each component function~$A_i(-)$ is continuous. The topology on~$k\R((S^{\sigma})^{\times n})$ is that of a quotient of the disjoint union of the spaces~$L(\C^{n_1}\oplus\cdots\oplus \C^{n_k},\C^{m})\times (S^{\sigma})^{\times k}$, which in turn carry the quotient topology via the restriction of~$L(\C^{n_1}\oplus\cdots\oplus \C^{n_k}\oplus \C^{m-\sum n_i},\C^{m})\times (S^\sigma)^{\times k}$ to the first~$k$ summands. Unraveling the definitions, it is hence sufficient to check continuity of the map
\[ L(\C^{n_1}\oplus\cdots\oplus \C^{n_k}\oplus \C^{m-\sum n_i},\C^{m})\times (S^{\sigma})^k\to U(m) \]
sending~$(f,\lambda_1,\hdots,\lambda_k)$ to~$f\circ D^{n_1,\hdots,n_k}(\lambda_1,\hdots,\lambda_k)\circ f^{-1}$. Here,~$D^{n_1,\hdots,n_k}(\lambda_1,\hdots,\lambda_k)$ is the diagonal matrix with each~$\lambda_i$ repeated~$n_i$ times, filled up with~$1$'s at the end. Since~$D^{n_1,\hdots,n_k}(-)$ is a continuous function in the~$\lambda_i$, the claim hence follows from continuity of composition and taking inverses in~$U(m)$.

Finally, to see that~$\varphi_n$ is a homeomorphism, let~$k\R((S^{\sigma})^{\times n})_{\leq m}$ be the subspace of~$k\R((S^{\sigma})^{\times n})$ consisting of all elements which can be represented by tuples~$(V_1,\hdots,V_k;x_1,\hdots,x_k)$ for which each~$V_i$ is a subspace of~$\C^m\subset \C^{\infty}$. Then~$\varphi_n$ restricts to a continuous bijective map
\[ (\varphi_n)_{\leq m}\co k\R((S^{\sigma})^{\times n})_{\leq m}\to C_n(U(m)). \]
The space~$k\R((S^{\sigma})^{\times n})_{\leq m}$ is the continuous image of the compact space 
\[ \bigsqcup_{n_1+\hdots + n_k\leq m;\,n_i>0}L(\C^{n_1}\oplus\cdots\oplus \C^{n_k},\C^m)\times (S^{\sigma})^{\times n}, \]
hence compact. Since~$C_n(U(m))$ is Hausdorff, it follows that the restricted map~$(\varphi_n)_{\leq m}$ is a homeomorphism. Both sides carry the colimit topology, so~$\varphi_n$ is a homeomorphism as well.
\end{proof}

\begin{Remark} The special case~$n=1$ gives a~$C_2$--homeomorphism~$k\R(S^{\sigma})\cong U$, leading to a short proof of Real Bott periodicity. Non-equivariantly, this proof is due to Harris \cite{Har80}. The~$C_2$--equivariant version also appeared in \cite{Sus88}, where it is called a `trivial' proof of Bott periodicity.
\end{Remark}

In order to prove the first part of Theorem \ref{thm:main} using Proposition \ref{prop:veryspecial}, we are hence left to show that the component sets of~$C_n(U)$ and~$C_n(O)$ are abelian groups. The underlying space~$C_n(U)$ is path-connected (as for example follows from the fact that~$(S^1)^{\times n}$ is path-connected and the evaluation of a~$\Gamma$--space on any path-connected space is again path-connected), hence automatically group-complete. Thus it remains to see:
\begin{Lemma} \label{lem:grouplike} The components~$\pi_0(C_n(O))\cong \pi_0(k\R((S^{\sigma})^{\times n})^{C_2})$ form an abelian group.
\end{Lemma}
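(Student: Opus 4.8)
The plan is to work with the explicit matrix model afforded by the homeomorphism $\varphi_n$ and to show that every class in $\pi_0(C_n(O))$ satisfies $2[\bar A]=0$, hence is its own inverse; since the monoid $\pi_0(C_n(O))$ is already commutative, this forces it to be an abelian group.

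First I would record how the monoid structure looks in this model. Under $\varphi_n$, the operation coming from the fold map is block sum of commuting tuples: representing two classes by $\tau$--fixed labelled configurations and moving one of them by a conjugation--equivariant unitary of~$\C^{\infty}$ so that all occurring subspaces become mutually orthogonal, the fold map sends the resulting point of $k\R((S^{\sigma})^{\times n}\vee (S^{\sigma})^{\times n})$ to the configuration whose associated commuting tuple is $(A_j\oplus B_j)_{j=1}^n$. The neutral element is the component of the all--identity tuple. So it suffices to exhibit, for every commuting $n$--tuple $\bar A$ of orthogonal matrices, a tuple $\bar B$ with $\bar A\oplus \bar B$ in the basepoint component of $C_n(O)$, and I will take $\bar B=\bar A$.

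Next I would invoke the real spectral theorem for the commuting family $A_1,\dots,A_n\in O(m)$: they are simultaneously orthogonally block--diagonalisable into $1$--dimensional blocks $L_\epsilon$, on which each $A_j$ acts as a sign $\epsilon_j\in\{\pm1\}$, and $2$--dimensional blocks on which the $A_j$ act as commuting rotations. On a $2$--dimensional rotation block with angles $(\theta_1,\dots,\theta_n)$, replacing $\theta_j$ by $t\theta_j$ gives a path inside $C_n(SO(2))\subseteq C_n(O)$ to the trivial block; so all rotation blocks, and in particular all trivial ($\epsilon=(1,\dots,1)$) blocks, lie in the basepoint component. The crucial observation concerns a doubled sign block with $\epsilon\neq(1,\dots,1)$: in $L_\epsilon\oplus L_\epsilon\cong\R^2$ each $A_j$ acts as the scalar matrix $\epsilon_j I_2$, and $-I_2$ is the rotation through~$\pi$; hence letting $A_j(t)$ be the rotation through $t\pi$ when $\epsilon_j=-1$ and $A_j(t)=I_2$ when $\epsilon_j=+1$ defines a path of commuting matrices in $C_n(SO(2))\subseteq C_n(O)$ from $L_\epsilon\oplus L_\epsilon$ at $t=1$ to the trivial block at $t=0$. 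Thus $2[L_\epsilon]=[L_\epsilon\oplus L_\epsilon]=0$ in $\pi_0(C_n(O))$.

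Putting the two decompositions together, an arbitrary $[\bar A]\in\pi_0(C_n(O))$ is a sum $\sum_\epsilon m_\epsilon\,[L_\epsilon]$ of classes of sign blocks, the rotation and trivial summands contributing the neutral element, so $2[\bar A]=\sum_\epsilon m_\epsilon\cdot 2[L_\epsilon]=0$; hence $[\bar A]$ is its own inverse and $\pi_0(C_n(O))$ is a group, which is abelian since the monoid structure is. The heart of the argument is the ``rotation trick'' showing that a doubled sign block is connected to the basepoint through commuting rotations; the only mildly technical point is the opening identification of the abstract $\Gamma$--space monoid structure with block sum, but this is routine once one unwinds the coend formula~\eqref{eq:coend} and the definition of $\varphi_n$.
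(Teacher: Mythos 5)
Your proof is correct and is essentially the paper's argument translated through $\varphi_n$ into matrix language: your decomposition into sign blocks and rotation blocks corresponds exactly to the paper's splitting of a $C_2$--fixed configuration into conjugation-fixed labels $(V;x)$ and conjugate pairs $(V,\overline V;x,\overline x)$, and your rotation trick for the doubled sign block is precisely the paper's observation that the doubled tuple lies in $SO(V\oplus W)^{\times n}\cong (S^1)^{\times n}$, which is path-connected. The only cosmetic difference is that you conclude every class is $2$--torsion, whereas the paper phrases the same computation as exhibiting an explicit inverse.
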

\begin{proof} The abelian monoid structure is given by the effect of the composite
\[ k\R((S^{\sigma})^{\times n})^{\times 2}\xleftarrow{\simeq} k\R((S^{\sigma})^{\times n}\vee (S^{\sigma})^{\times n}) \xrightarrow{k\R(\text{fold})} k\R((S^{\sigma})^{\times n}) \]
on~$\pi_0^{C_2}$, where the first arrow is a~$C_2$--equivariant homotopy inverse to the map~$P_2$ of Lemma \ref{lem:special}. As explained in the proof of the lemma, such a homotopy inverse can be defined by choosing a~$C_2$--equivariant unitary embedding~$\phi\co \C^2\otimes \C^{\infty}\hookrightarrow \C^{\infty}$ (with~$C_2$ acting only through complex conjugation on~$\C^{\infty}$ and not on~$\C^2$). The resulting multiplication is then given by
\begin{multline*} \mu([(V_1,\hdots,V_k;x_1,\dots,x_n)],[(W_1,\hdots,W_l;y_1,\dots,y_l)])= \\ [(\phi(e_1\otimes V_1),\hdots,\phi(e_1\otimes V_k),\phi(e_2\otimes W_2),\hdots,\phi(e_2\otimes W_l);x_1,\hdots,x_k,y_1,\hdots,y_l)].\end{multline*}
Here, it does not matter which~$\phi$ we choose, the induced map on~$\pi_0^{C_2}$ is always the same. Now let~$[(V_1,\hdots,V_k;x_1,\dots,x_k)]$ be~$C_2$--fixed. Without loss of generality, we can assume that the~$x_i$ are pairwise different and all~$V_i$ non-trivial. Then~$C_2$ acts on the set~$\{V_1,\hdots,V_k\}$, i.e., each~$V_i$ is either fixed under conjugation or its conjugate~$\overline{V_i}$ is also contained in this set. If~$V_i=\overline{V_i}$, then the tuple~$(V_i,x_i)$ also lies in~$k\R((S^{\sigma})^{\times n})^{C_2}$. If~$\overline{V_i}=V_j$ for~$j\neq i$, then the tuple~$(V_i,V_j;x_i,x_j)$ lies in~$k\R((S^{\sigma})^{\times n})^{C_2}$. Moreover, the component of~$(V_1,\hdots,V_k;x_1,\dots,x_k)$ is the sum of the components of those tuples, which one can see by induction on the number of the tuples by choosing the map~$\phi$ above accordingly in the induction step. Hence, we can reduce to two special cases: The tuples~$(V,\overline{V};x,\overline{x})$ where~$V$ is such that it is orthogonal to its conjugate~$\overline{V}$, and the tuples~$(V,x)$ for~$V=\overline{V}$ and~$x\in \{-1,1\}$.

Case 1:~$(V,\overline{V};x,\overline{x})$. For every component~$i\in \{1,\hdots,n\}$ we can choose a path from~$x^{(i)}$ to~$1$ inside~$S^{\sigma}$. By choosing the conjugate path for the component~$\overline{x}^{(i)}$, we obtain a path in~$k\R((S^{\sigma})^{\times n})^{C_2}$ from~$(V,\overline{V};x,\overline{x})$ to~$(V,\overline{V};1,1)$, which is equivalent to the basepoint. Hence,~$(V,\overline{V};x,\overline{x})$ already represents the path component of the basepoint.

Case 2:~$(V,x)$. We can further reduce to~$V$ being~$1$--dimensional, since otherwise we can write the class of~$(V,x)$ as the sum of even smaller tuples by choosing an orthogonal decomposition of~$V$ into~$1$--dimensional subspaces. Let~$W$ be any~$1$--dimensional subspace of~$\C^{\infty}$ which is orthogonal to~$V$ and satisfies~$\overline{W}=W$. We claim that~$(W,x)$ is an inverse for~$(V,x)$, up to homotopy. By making a suitable choice for the map~$\phi$ above, one sees that their sum is represented by the tuple~$(V,W;x,x)$, which is equivalent to~$(V\oplus W; x)$. This tuple corresponds to an element of~$SO(V\oplus W)^{\times n}\subset C_n(O)$. Since~$SO(V\oplus W)^{\times n}\cong (S^1)^{\times n}$ is path-connected, we are done.
\end{proof}
This finishes the proof that there is a~$C_2$--equivalence
\begin{equation} \label{eq:hom} C_n^{\R}\cong k\R((S^{\sigma})^{\times n})\simeq \Omega^{\infty} (k\R\wedge (S^{\sigma})^{\times n}). \end{equation}
Next we consider the quotient~$\Rep_n^{\R}=C_n^{\R}/U$. We first justify that the~$C_2$--fixed points of~$\Rep_n^{\R}$ indeed agree with~$\Rep_n(O)$, which is not completely formal.

\begin{Lemma}
The inclusion~$C_n(O)\to C_n^{\R}$ induces a homeomorphism~$\Rep_n(O)\cong (\Rep_n^{\R})^{C_2}$.
\end{Lemma}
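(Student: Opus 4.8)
The plan is to show that the inclusion $C_n(O)\hookrightarrow C_n^{\R}$ descends to a continuous bijection $\Rep_n(O)\to(\Rep_n^{\R})^{C_2}$ between spaces that are both filtered colimits of compact Hausdorff spaces, and then to conclude that it is a homeomorphism.

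First, $C_n(O)\hookrightarrow C_n^{\R}$ is $C_2$-equivariant for the trivial action on the source, so by the universal property of the quotient topology it descends to a continuous map $\Rep_n(O)=C_n(O)/O\to C_n^{\R}/U=\Rep_n^{\R}$; since a tuple of real matrices is fixed by complex conjugation, this map lands in $(\Rep_n^{\R})^{C_2}$, and it remains to see that the resulting map is a homeomorphism. I would identify $\Rep_n(U)$ with the set of isomorphism classes of finite-dimensional unitary representations of $\Z^n$ (two commuting tuples being $U$-conjugate exactly when they are isomorphic as representations), with $C_2$ acting by $\rho\mapsto\overline\rho$, and likewise $\Rep_n(O)$ with isomorphism classes of finite-dimensional orthogonal $\Z^n$-representations, the map being complexification. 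Both spaces are the colimit, along closed embeddings of compact Hausdorff spaces, of the subspaces $\Rep_n(U(m))$ and $\Rep_n(O(m))$: injectivity of $\Rep_n(U(m))\to\Rep_n(U(m+1))$ follows by adjusting a conjugating unitary by an element of the centraliser of the target so that it fixes $\C^m$ pointwise, and the closed embedding property is then automatic from compactness, with the same for $O$. Since $C_2$-fixed points commute with filtered colimits along closed equivariant embeddings, $(\Rep_n^{\R})^{C_2}=\colim_m\Rep_n(U(m))^{C_2}$, so it suffices to prove that $\Rep_n(O(m))\to\Rep_n(U(m))^{C_2}$ is a continuous bijection for each $m$; a continuous bijection of compact Hausdorff spaces is a homeomorphism, and the colimit of these homeomorphisms is the map we want.

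For injectivity, suppose $g\in U(m)$ conjugates $\underline A\in C_n(O(m))$ to $\underline B\in C_n(O(m))$. Since the entries are real, $\overline g$ does too, so $c:=g^{-1}\overline g$ lies in the centraliser $Z$ of $\underline A$ in $U(m)$ and satisfies $\overline c=c^{-1}$. The group $Z$ is a product of unitary groups indexed by the common eigenspaces of $\underline A$, stable under complex conjugation (as $\underline A$ is real), with conjugation permuting the factors — swapping the eigenspaces for a non-real eigenvalue-tuple $\lambda$ and $\overline\lambda$, and fixing those for real $\lambda$. A short computation, using Hilbert's Theorem 90 for the orthogonal-unitary pair on the conjugation-fixed factors and a Shapiro-type triviality on the swapped pairs, shows that any such $c$ has the form $z^{-1}\overline z$ with $z\in Z$; then $r:=gz^{-1}$ satisfies $\overline r=r$, i.e.\ $r\in O(m)$, and $r\underline Ar^{-1}=g\underline Ag^{-1}=\underline B$. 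For surjectivity one must show that a finite-dimensional unitary $\Z^n$-representation $\rho$ with $\rho\cong\overline\rho$ is the complexification of an orthogonal $\Z^n$-representation of the same dimension. Since $\Z^n$ is abelian, every complex irreducible is a character $\chi$, and $\chi\cong\overline\chi$ forces $\chi$ to take values in $\{\pm1\}$; hence there is no quaternionic-type obstruction, and $\rho\cong\overline\rho$ simply says that the multiset of characters of $\rho$ is invariant under $\chi\mapsto\overline\chi$. Pairing the non-self-conjugate characters into $2$-dimensional real rotation blocks and the self-conjugate ones into $1$-dimensional real summands produces a real form of the same dimension, whose isomorphism class is determined by this character data (which re-proves injectivity as well).

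The step I expect to be the main obstacle is surjectivity — equivalently, the fact that taking $C_2$-fixed points commutes with the conjugation quotient in this case, even though $(X/G)^{C_2}\neq X^{C_2}/G^{C_2}$ in general. The substance is the representation-theoretic input above: the absence of quaternionic phenomena for $\Z^n$. Spelling this out purely in matrix terms amounts to converting the relation $g\underline Ag^{-1}=\overline{\underline A}$ into an explicit orthonormal basis of $\C^m$ — real on the $\{\pm1\}$-common-eigenspaces of $\underline A$ and organised in conjugate pairs elsewhere — in which every $A_j$ becomes a real orthogonal matrix, and the only delicate point is making a compatible choice of $g$ (equivalently, trivialising the relevant low-degree group-cohomology class).
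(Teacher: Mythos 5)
Your proposal is correct and follows the same overall strategy as the paper: reduce to the finite-rank case by writing both sides as sequential colimits along closed inclusions of compact Hausdorff spaces (fixed points commuting with such colimits), prove that each map $\Rep_n(O(m))\to\Rep_n(U(m))^{C_2}$ is a continuous bijection, and conclude by compactness of the source and Hausdorffness of the target. The surjectivity step is in substance identical to the paper's: decompose a self-conjugate unitary $\Z^n$--representation into characters, note that self-conjugate characters are $\{\pm1\}$--valued (the paper phrases this via a Real structure $J$ with $J^2=\id$ on each such summand), and pair non-self-conjugate characters into $2$--dimensional real blocks. The one genuine divergence is injectivity: the paper simply quotes two standard facts (complexification reflects isomorphism of real representations, and real isomorphism of orthogonal representations implies orthogonal isomorphism), whereas you split the cocycle $c=g^{-1}\overline g$ in the centraliser of the tuple explicitly, using the Takagi/Hilbert--90 fact that a symmetric unitary matrix is of the form $z^{T}z$ on the conjugation-fixed eigenspace factors and the trivial (Shapiro-type) splitting on the swapped pairs; this is a legitimate, self-contained alternative, at the cost of having to verify the vanishing of the relevant nonabelian $H^1$ by hand, which you correctly identify as the only delicate point and which does go through since the centraliser is a product of unitary groups permuted and conjugated in the way you describe.
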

\begin{proof}
Fixed points commute with sequential colimits along closed inclusions, so it suffices to show that for every~$k\geq 1$ the map~$c_k\colon \thinspace \Rep_n(O(k))\to (\Rep_n(U(k)))^{C_2}$ is a homeomorphism. In fact, it suffices to show that~$c_k$ is bijective, since~$\Rep_n(O(k))$ is compact and~$(\Rep_n(U(k)))^{C_2}$ is Hausdorff.

Injectivity of~$c_k$ follows from the general facts that if~$V$ and~$W$ are real representations of an arbitrary group~$G$ and~$\C\otimes_{\R}V\cong \C\otimes_{\R} W$ as complex~$G$--representations, then~$V\cong W$ as real~$G$--representations (\cite[Corollary 3.28]{Ad69}), and two orthogonal~$G$--representations which are isomorphic as real~$G$--representations are already isomorphic as orthogonal~$G$--representations (see e.g.  \cite[Lemma 1.9]{Go12}).

To prove that~$c_k$ is surjective, suppose that~$V\in \Rep_n(U(k))$ is fixed under the~$C_2$--action. This means that there is an isomorphism~$V\cong V^c$, where~$V^c$ is the conjugate representation. We must show that~$V$ is the complexification of an orthogonal representation. Since~$\Z^n$ is abelian, we may assume without loss of generality that~$V$ is a direct sum~$V_1\oplus \dots \oplus V_k$ of~$1$--dimensional unitary representations. Then for each~$i$ we have that either~$V_i\cong V^c_i$ or~$V_i\cong V^c_j$ for some~$j\neq i$. In the first case, an isomorphism~$V_i\cong V^c_i$ may be viewed equivalently as a conjugate linear isomorphism~$J\colon \thinspace V_i\to V_i$. Now~$J$ being unitary and conjugate linear implies that~$J^2=\id$, i.e.,~$J$ is a~$\Z^n$--equivariant Real structure on~$V_i$. It follows that~$V_i$ is isomorphic to the complexification of the~$(+1)$--eigenspace of~$J$, which is an orthogonal~$1$--dimensional representation. Now if~$V_i\cong V^c_j$ for~$j\neq i$, then~$V_i\oplus V_j\cong V_i\oplus V^c_i$. In this case, fix a non-zero~$e\in V_i$ and let~$\overline{e}\in V^c_i$ be the same vector viewed as an element in the conjugate representation. Define~$J\colon \thinspace V_i\oplus V^c_i\to V_i\oplus V^c_i$ by~$J(e)=\overline{e}$,~$J(\overline{e})=e$ and conjugate linearity. Then~$J$ is a~$\mathbb{Z}^n$--equivariant Real structure on~$V_i\oplus V^c_i$ and~$V_i\oplus V^c_i$ is isomorphic to the complexification of the~$(+1)$--eigenspace of~$J$. This is an orthogonal~$2$--dimensional representation. Altogether, we see that~$c_k$ is surjective.\end{proof}

Now we finish the proof of Theorem \ref{thm:main}. By what we saw above,~$C_n^{\R}$ can be described as configurations on~$(S^{\sigma})^{\times n}$ with labels in pairwise orthogonal subspaces of~$\C^{\infty}$. In this picture, the unitary group~$U$ acts through the labels and fixes the configurations. Now, given any two tuples of pairwise orthogonal subspaces~$(V_1,\hdots,V_k)$ and~$(V'_1,\hdots,V'_k)$, there exists an element~$A\in U$ such that~$A(V_i)=V'_i$ for all $i=1,\hdots,k$ if and only if~$\dim(V_i)=\dim(V'_i)$. This means that the quotient space~$\Rep_n^{\R}$ only remembers the dimension of the labels. In other words, it can be described as configurations in~$(S^{\sigma})^{\times n}$ with labels in the natural numbers, i.e., it is homeomorphic to the infinite symmetric product~$Sp^{\infty}((S^{\sigma})^{\times n})$. Under this homeomorphism, the filtration by the~$\Rep_n(U(k))$ corresponds to the filtration of~$Sp^{\infty}((S^{\sigma})^{\times n})$ by the finite symmetric products~$Sp^k((S^{\sigma})^{\times n})$. It follows from a theorem of dos Santos \cite{dS03} that the infinite symmetric product~$Sp^{\infty}(A)$ of a based~$C_2$--CW complex~$A$ agrees with~$\Omega^{\infty}(H\underline{\Z}\wedge A)$ if the component sets~$\pi_0(Sp^{\infty}(A)^{C_2})$ and~$\pi_0(Sp^{\infty}(A))$ are groups. Using~$C_2-\Gamma$--spaces, this statement follows from Propositions \ref{prop:special} and \ref{prop:veryspecial} by noting that~$Sp^{\infty}(A)$ is the evaluation of the~$C_2$--cofibrant special~$C_2-\Gamma$--space
\[ k_+\mapsto Sp^{\infty}(k_+). \]
The infinite loop space of the spectrum realization of this~$C_2-\Gamma$--space is given by
\[ \Omega^{\infty}(Sp^{\infty}(\mathbb{S}))\simeq \Omega Sp^{\infty}(S^1)\simeq \Omega S^1 \simeq \Z \]
with trivial~$C_2$--action, using the fact that the infinite symmetric product of~$S^1$ is again equivalent to~$S^1$ by the Dold-Thom theorem. Hence,~$Sp^{\infty}(\mathbb{S})$ is a model for~$H\underline{\Z}$. Note that the component sets~$\pi_0(\Rep_n(U))$ and~$\pi_0(\Rep_n(O))$ are groups, since this is the case for the spaces of commuting elements and the map~$C_n^{\R}\to \Rep_n^{\R}$ induces a surjective monoid homomorphism on components and on components of fixed points. Thus we obtain an equivalence
\begin{equation} \label{eq:rep} \Rep_n^{\R} \cong Sp^{\infty}((S^{\sigma})^{\times n})\simeq \Omega^{\infty}(H\underline{\Z}\wedge (S^{\sigma})^{\times n}), \end{equation}
as desired. Moreover, under the two equivalences \eqref{eq:hom} and \eqref{eq:rep}, the map~$C_n^{\R}\to \Rep_n^\R$ is induced by the map of~$C_2-\Gamma$--spaces~$k\R\to Sp^{\infty}(-)$ sending~$(V_1,\hdots,V_k)$ to~$(\dim V_1,\hdots,\dim V_k)$, which models the~$0$--th Postnikov section after spectrum realization.
This finishes the proof of Theorem~\ref{thm:main}.\bigskip

Finally, we come to the proof of Theorem \ref{thm:bcom}, i.e., the identification of the homotopy type of~$B_{\textnormal{com}}^{\R}$. Recall that restricting the structure maps of the bar construction turns the collection of~$C_2$--spaces~$C_\bullet^{\R}$ into a simplicial~$C_2$--space. On the other hand,~$S^{\sigma}$ is a~$C_2$--equivariant topological group via multiplication in~$\C$, and so the spaces~$(S^{\sigma})^{\times n}$ also form a simplicial~$C_2$--space via the bar construction. Applying~$k\R(-)$ we obtain a simplicial~$C_2$--space~$k\R((S^{\sigma})^{\times \bullet})$.
\begin{Lemma} \label{lem:simplicial} The homeomorphisms~$\varphi_n\co k\R((S^{\sigma})^{\times n})\xrightarrow{\cong} C_n^{\R}$ assemble into an isomorphism of simplicial~$C_2$--spaces
\[ \varphi:k\R((S^{\sigma})^{\times \bullet})\xrightarrow{\cong} C_{\bullet}^{\R}. \]
\end{Lemma}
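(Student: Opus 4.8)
The plan is to check, for each of the generating face and degeneracy operators of $\Delta$, that the square comparing $\varphi_n$ with $\varphi_m$ commutes; since each $\varphi_n$ is already a $C_2$-equivariant homeomorphism by the previous proposition, this suffices to conclude that $\varphi$ is an isomorphism of simplicial $C_2$-spaces. First I would write both simplicial structures down in matching coordinates. On $C_\bullet^{\R}$ the operators are the restrictions of the bar construction $B_\bullet U$: the face map $d_j$ for $1\le j\le n-1$ multiplies the $j$-th and $(j+1)$-th matrices, $d_0$ and $d_n$ delete the first respectively last matrix, and the degeneracy $s_j$ inserts an identity matrix; these restrict to $C_n^{\R}$ because products and sub-tuples of pairwise commuting matrices remain pairwise commuting. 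On $k\R((S^{\sigma})^{\times\bullet})$ the operators are obtained by applying the functor $k\R(-)$ to the analogous operators of the bar construction $B_\bullet(S^{\sigma})$ of the $C_2$-group $S^{\sigma}$, so an operator $\theta$ sends $[(V_1,\dots,V_k;x_1,\dots,x_k)]$ to $[(V_1,\dots,V_k;\theta x_1,\dots,\theta x_k)]$, where $\theta$ acts on the $n$ components of each $x_i\in(S^{\sigma})^{\times n}$ by the corresponding operator of $B_\bullet(S^{\sigma})$.

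The key observation is elementary. Recall $\varphi_n$ sends $[(V_1,\dots,V_k;x_1,\dots,x_k)]$ to the commuting tuple $(A_1(x),\dots,A_n(x))$, where $A_j(x)$ acts on $V_i$ by the scalar $x_i^{(j)}$ and by the identity on the orthogonal complement $(\bigoplus_i V_i)^{\perp}$. Since these matrices are simultaneously diagonal on the $V_i$, the product $A_j(x)A_{j+1}(x)$ acts on $V_i$ by $x_i^{(j)}x_i^{(j+1)}$ and by the identity on $(\bigoplus_i V_i)^{\perp}$; in other words, multiplying adjacent matrices in the target tuple corresponds exactly to multiplying the adjacent components $x^{(j)},x^{(j+1)}$ in $S^{\sigma}$. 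Likewise, deleting the first or last component of each $x_i$ corresponds to deleting $A_1(x)$ or $A_n(x)$, and inserting the basepoint $1\in S^{\sigma}$ as a new component corresponds to inserting an identity matrix, since multiplication by $1$ is the identity. Running through the list of generating operators, each of these translations is precisely the assertion that the corresponding square commutes on a representative $(V_1,\dots,V_k;x_1,\dots,x_k)$; since all maps involved are well defined on equivalence classes, the squares commute on $k\R((S^{\sigma})^{\times n})$ itself, and the lemma follows.

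I do not expect a genuine obstacle: the argument is bookkeeping once the two simplicial structures are expressed in the same coordinates. The one point deserving a moment of care is to confirm that the face operator on the $k\R$-side which ``multiplies adjacent slots'' is $k\R(-)$ applied to the map $(S^{\sigma})^{\times n}\to(S^{\sigma})^{\times(n-1)}$ that multiplies adjacent coordinates of $S^{\sigma}$, and not some operation built from the $\Gamma$-space monoid structure on $k\R(S^{\sigma})$ --- these two differ, and it is exactly the bar construction of the topological group $S^{\sigma}$ that matches the bar construction on $U$ under $\varphi_n$, because simultaneously diagonalizable commuting unitaries multiply by multiplying eigenvalues.
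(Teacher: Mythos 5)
Your proposal is correct and matches the paper's argument: the paper's proof is exactly the one-line observation that if $A$ and $A'$ act by the scalars $\lambda$ and $\lambda'$ on a subspace $V$, then $A\cdot A'$ acts by $\lambda\cdot\lambda'$, which is your key bookkeeping step for the face maps (the degeneracies and outer faces being immediate). Your extra care about the simplicial structure on the $k\R$-side being $k\R(-)$ applied to the bar construction of $S^{\sigma}$ is consistent with how the paper defines it.
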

\begin{proof} This is a direct check which comes down to the fact that if~$A$ and~$A'$ act by multiplication with~$\lambda$ respectively~$\lambda'$ on a subspace~$V$ of~$\C^\infty$, then~$A\cdot A'$ acts by multiplication with~$\lambda\cdot \lambda'$ on~$V$. 
\end{proof}

It follows that~$\varphi$ induces a~$C_2$--equivariant homeomorphism 
\[ |k\R((S^{\sigma})^{\times \bullet})|\xrightarrow{\cong} |C_{\bullet}^{\R}|=B_{\textnormal{com}}^{\R}. \]
Furthermore, the evaluation functor~$k\R(-)$ commutes with geometric realization (\cite[Proposition B.29]{Sch18}), hence we obtain a~$C_2$--equivariant homeomorphism
\[ |k\R((S^{\sigma})^{\times \bullet})|\cong k\R(|(S^{\sigma})^{\times \bullet}|). \]
The~$C_2$--space~$BS^{\sigma}=|(S^{\sigma})^{\times \bullet}|$ is~$C_2$--equivariantly homotopy equivalent to~$\CP^{\infty}$ with action by complex conjugation, because both are classifying spaces for the~$C_2$--group~$S^{\sigma}$. Since~$k\R(-)$ preserves~$C_2$--homotopy equivalences, we have proved that~$B_{\textnormal{com}}^{\R}$ is~$C_2$--equivalent to~$k\R(\CP^{\infty})$. Now both the underlying space and the~$C_2$--fixed points of~$\CP^{\infty}$ are path-connected, hence the same is true for the evaluation~$k\R(\CP^{\infty})$. In particular, Proposition \ref{prop:veryspecial} applies, which finishes the proof of Theorem \ref{thm:bcom}.

\begin{Remark} The fixed point space~$B_{\textnormal{com}}O\simeq \Omega^{\infty}(k\R\wedge \CP^{\infty})^{C_2}$ receives a map
\[ \Omega^{\infty}(ko\wedge \RP^{\infty})\simeq \Omega^{\infty}(k\R\wedge \RP^{\infty})^{C_2} \to \Omega^{\infty}(k\R\wedge \CP^{\infty})^{C_2}, \]
induced by the inclusion~$\RP^{\infty}\to \CP^{\infty}$, viewed as a~$C_2$--equivariant map by giving~$\RP^{\infty}$ the trivial action. Using the methods from this section, one can show that~$\Omega^{\infty}(ko\wedge \RP^{\infty})$ is in fact equivalent to the subspace of~$B_{\textnormal{com}}O$ given by the geometric realization of the simplicial space of commuting tuples of symmetric orthogonal matrices
\[ n\mapsto \{ (A_1,\hdots,A_n)\in C_n(O)\ |\ A_i=A_i^t \text{ for all }i\}, \]
and the map above corresponds to the inclusion into~$B_{\textnormal{com}}O$.
\end{Remark}

\begin{Remark} There is a different way to package the various $C_n^{\R}$ into a single $C_2$--space, denoted $\Comm^{\R}$, that has been considered non-equivariantly in \cite{CS16b}. It is defined via
\[ \Comm^{\R} = (\bigsqcup_{n\in \mathbb{N}} C_n^{\R})/\sim \]
with the equivalence relation generated by $(A_1,\hdots,A_n)\sim (A_1,\hdots,A_{i-1},A_{i+1},\hdots,A_n)$ if $A_i$ is the identity matrix. In other words, $\Comm^{\R}$ is the colimit along the degeneracies $\Delta_{surj}^{op}$ in the simplicial $C_2$--space $C_{\bullet}^{\R}$ above. The $C_2$--space $\Comm^{\R}$ is a subspace of the reduced James construction $J(U)$ which is obtained via the same formulas, replacing $C_n^{\R}$ by the full cartesian product $U^{\times n}$. Up to weak equivalence, $\Comm^{\R}$ can also be defined as the homotopy colimit of the functor $\Delta_{surj}^{op}\to C_2-spaces$ (this can be derived from \cite[Proposition A.1]{AG15}, which says that in every degree the inclusion of the degenerate simplices is a Hurewicz cofibration both on underlying spaces and on $C_2$--fixed points). Hence, Theorem \ref{thm:main} and Lemma \ref{lem:simplicial} provide a $C_2$--equivalence
\[ \Comm^{\R} \simeq \hocolim_{[n]\in \Delta_{surj}^{op}} \Omega^{\infty}(k\R \wedge (S^{\sigma})^{\times n}),\]
where the structure maps on the right hand side are induced by the degeneracies in the bar construction for $S^{\sigma}$. In particular, these structure maps are infinite loop maps, but the homotopy colimit is taken inside $C_2$--spaces and not infinite loop $C_2$--spaces. Unlike geometric realization, homotopy colimits indexed on $\Delta_{surj}^{op}$ do not commute with the forgetful functor from infinite loop $C_2$--spaces to $C_2$--spaces. As a consequence, the infinite loop space structures on the $C_n^{\R}$ do not induce an infinite loop space structure on $\Comm^{\R}$. If one forms the homotopy colimit in infinite loop $C_2$--spaces instead, one obtains
\[\Omega^{\infty} (\hocolim_{[n]\in \Delta_{surj}^{op}}k\R\wedge (S^{\sigma})^{\times n})\simeq \Omega^{\infty}(k\R\wedge \hocolim_{[n]\in \Delta_{surj}^{op}}(S^{\sigma})^{\times n})\simeq \Omega^{\infty}(k\R\wedge J(S^{\sigma})) ,\] where $J(S^{\sigma})$ is the reduced James construction on $S^{\sigma}$.
\end{Remark}

\section{Homotopy groups} \label{sec:homgroups}
In this section we compute the homotopy groups of the spaces of commuting elements~$C_n(U)$ and~$C_n(O)$, the spaces of representations~$\Rep_n(U)$ and~$\Rep_n(O)$, and the classifying spaces~$B_{\textnormal{com}}U$ and~$B_{\textnormal{com}}O$ using Theorems \ref{thm:main} and \ref{thm:bcom} and well-known descriptions of the homotopy groups of~$k\R$ and~$H\underline{\Z}$. Our main focus is on the more subtle orthogonal case, but we go through the unitary case as a warm-up. The presentation of the homotopy groups is most compact when stated as a functor in~$n$, which is described in Section \ref{sec:fi}.

\subsection{\texorpdfstring{$k\R$}{kR}--module structure} \label{sec:krmodule} We will use the fact that $k\R$ and $H\underline{\mathbb{Z}}$ are homotopy-commutative~$C_2$--ring spectra, which is well-known. In fact they allow strictly commutative multiplications, as can also be seen with the models that we use: The infinite symmetric product $Sp^{\infty}$ carries a commutative multiplication via 
\begin{alignat*}{2} && Sp^{\infty}(S^V)\wedge Sp^{\infty}(S^W)& \to Sp^{\infty}(S^{V\oplus W}) \\
&& (x_i;a_i)_{i=1,\hdots,n}\wedge (y_j;b_j)_{j=1,\hdots,m} & \mapsto (x_i\wedge y_j;a_ib_j)_{i=1,\hdots,n, j=1,\hdots,m}\end{alignat*}
with unit $(*,1)\in Sp^{\infty}(S^0)$, where $*\in S^0$ denotes the non-basepoint. The formula for $k\R$ is similar by taking the tensor product of the labeling complex subspaces. To make this work, one needs to modify the model for $k\R$ slightly, since the tensor product of two subspaces of $\C^{\infty}$ is not canonically again a subspace of $\C^{\infty}$. One way is to replace the ambient $\C^{\infty}$ by~$\Sym(V_{\C})$, the free commutative algebra on the complexification of $V$, varying functorially in the levels $V$ of the orthogonal $C_2$--spectrum. This works because the tensor product of a subspace of~$\Sym(V_{\C})$ with a subspace of~$\Sym(W_{\C})$ is a subspace of~$\Sym(V_{\C})\otimes \Sym(W_{\C})$, which is naturally isomorphic to~$\Sym((V\oplus W)_{\C})$. Hence, the assignment $(x_i;V_i)_i\wedge(y_j;W_j)_j\mapsto (x_i\wedge y_j;V_i\otimes W_j)_{i,j}$ gives a well-defined commutative multiplication with unit $(*,\C)$.
See \cite[Section 6.3]{Sch18} for the details on this construction in the case of equivariant K-theory.

Because of the simple structure of~$k\R\wedge (S^{\sigma})^{\times n}$ and~$k\R\wedge \CP^\infty$ as~$k\R$--module spectra, deriving the homotopy groups from our theorems is straightforward. We first recall that for any two based~$C_2$--CW complexes $A$ and $B$, the cofiber sequence
\[ A\vee B\to A\times B \to A\wedge B \]
has a natural stable splitting $A\times B\to A\vee B$, obtained by taking the sum over the two projections. In particular, there is a natural $C_2$--stable equivalence
\[ \Sigma^{\infty}(A\times B)\simeq \Sigma^{\infty}(A\vee B \vee (A\wedge B))\,. \]
Setting $A=B=S^{\sigma}$ and iterating this decomposition, we obtain a $C_2$--stable equivalence
\[ \Sigma^{\infty}\left((S^{\sigma})^{\times n}\right)\simeq \Sigma^{\infty}\left(\bigvee_{k=1}^n\bigvee_{n \choose k} S^{k\cdot \sigma}\right)\,. \]
As a consequence, there is a splitting of~$k\R$--modules
\[
k\R\wedge (S^{\sigma})^{\times n}\simeq \bigvee_{k=1}^n\bigvee_{n \choose k} k\R\wedge S^{k\cdot \sigma} \,.
\]
Hence, by Theorem \ref{thm:main}, we obtain a~$C_2$--equivalence
\begin{equation} \label{eq:splitcnr}
C_n^{\R}\simeq \prod_{k=1}^n\prod_{n\choose k} \Omega^{\infty}\left(k\R\wedge S^{k\cdot \sigma}\right)\, .
\end{equation}
Similarly, there is a~$C_2$--equivalence
\begin{equation} \label{eq:splitrnr}
\Rep_n^{\R}\simeq \prod_{k=1}^n\prod_{n\choose k} \Omega^{\infty}\left(H\underline{\Z}\wedge S^{k\cdot \sigma}\right)\,.
\end{equation}

The~$C_2$--space~$\CP^{\infty}$ does not decompose stably into its cells. However, it does split after smashing with~$k\R$, because~$k\R$ is Real orientable. A Real orientation is given by the canonical~$C_2$--equivariant map $\CP^{\infty}\simeq BS^{\sigma}\to BU\simeq \Omega^{\infty}(k\R\wedge S^{1+\sigma})$, which restricts to the unit on~$\CP^1\cong S^{1+\sigma}$ (cf. \cite[Theorem 2.8]{HK01}). By \cite[Theorem 2.10]{HK01}, this determines an equivalence of~$k\R$--modules
\[ k\R\wedge \CP^{\infty}\simeq \bigvee_{k>0}k\R\wedge S^{k\cdot (1+\sigma)}. \]
It follows from Theorem \ref{thm:bcom} that there is a~$C_2$--equivalence
\begin{equation} \label{eq:splitbcom}
B_{\textnormal{com}}{\R}\simeq {\prod_{k>0}}'\Omega^{\infty} (k\R\wedge S^{k\cdot (1+\sigma)} )\,,
\end{equation}
where the decoration indicates that the weak infinite product is used.

\subsection{Unitary case}

\subsubsection{The homotopy groups of $C_n(U)$} Non-equivariantly, the equivalence (\ref{eq:splitcnr}) shows that~$C_n(U)$ factors as a product of infinite loop spaces of the form~$\Omega^{\infty}(ku\wedge S^k)$ for certain~$k$. By Bott periodicity, this infinite loop space is~$U\langle k\rangle$ when~$k$ is odd and~$BU\langle k\rangle$ when~$k$ is even, where~$X\langle k\rangle$ denotes the~$(k-1)$--connected cover of a space~$X$. The graded group~$\pi_*(\Omega^{\infty}(ku\wedge S^k))$ is thus free as a module over~$\pi_\ast(ku)$ on one generator in degree~$k$. Let us denote this module by~$\pi_\ast(ku)[k]$. Then we have an isomorphism
\[ \pi_*(C_n(U))\cong \bigoplus_{k=1}^n \bigoplus_{n\choose k} \pi_\ast(ku)[k] \]
of~$\pi_\ast(ku)$--modules. 
Counting ranks we obtain
\[ \pi_{2i}(C_n(U))\cong \Z^{\sum_{1\leq j\leq i} {n\choose 2j}}\,, \hspace{0.8cm} \pi_{2i+1}(C_n(U))\cong \Z^{\sum_{0\leq j\leq i} {n\choose 2j+1}}\]
for all~$i\geq 0$.

\subsubsection{The homotopy groups of $\Rep_n(U)$} Similarly, on underlying spaces, the equivalence (\ref{eq:splitrnr}) describes a decomposition of~$\Rep_n(U)$ into a product of Eilenberg-MacLane spaces. The homotopy type of~$\Rep_n(U)$ was previously described by Ramras \cite[Theorem 6.6]{Ra11} and Adem-Cohen-G\'omez \cite[Theorem 6.8]{ACG10}. There are isomorphisms
\[ \pi_i(\Rep_n(U))\cong \Z^{n\choose i}\]
for all~$i \geq 1$. Moreover, since~$C_n(U)\to \Rep_n(U)$ is induced by the Postnikov section~$ku \to H\Z$, the induced map of graded groups
\[ \pi_*(C_n(U))\cong \bigoplus_{k=1}^n \bigoplus_{n\choose k} \pi_\ast(ku)[k]\to \pi_*(\Rep_n(U))\cong \bigoplus_{k=1}^n \bigoplus_{n\choose k} \Z[k] \]
is given by collapsing~$\pi_\ast(ku)$ to the copy of~$\Z$ in degree~$0$.

\subsubsection{The homotopy ring of $B_{\textnormal{com}}U$} \label{sec:bcomu} The classifying space for commutative complex K--theory $B_{\textnormal{com}}U$ was the main subject of \cite{Gri18}. Using (\ref{eq:splitbcom}) we see that~$B_{\textnormal{com}}U$ splits as a product of all the higher connected covers~$BU\langle 2k\rangle$ for~$k>0$. On the level of spectra this is a splitting of~$ku$--modules, so there is an isomorphism
\[
\pi_\ast(B_{\textnormal{com}}U)\cong \bigoplus_{k>0} \pi_\ast(ku)[2k]
\]
of~$\pi_\ast(ku)$--modules. In particular, the homotopy groups of~$B_{\textnormal{com}}U$ are
\[
\pi_{2i}(B_{\textnormal{com}}U)=\Z^i\,, \hspace{0.8cm} \pi_{2i+1}(B_{\textnormal{com}}U)=0
\]
for all~$i\geq 0$.

The spectrum $ku\wedge \CP^\infty_+$, the $ku$--group ring of $\CP^{\infty}$, is an augmented $ku$-algebra. Consequently,~$\pi_*(ku\wedge \CP^\infty_+)$ is an augmented~$\pi_*(ku)$--algebra. The~$\pi_\ast(ku)$--module~$\pi_\ast(B_{\textnormal{com}}U)\cong \pi_\ast(ku\wedge \CP^\infty)$ is canonically isomorphic to the augmentation ideal of this augmented~$\pi_*(ku)$-algebra and hence carries the structure of a non-unital~$\pi_\ast(ku)$--algebra. It seems worth spelling out this multiplicative structure, since on the level of~$B_{\textnormal{com}}U$ it admits a geometric description in terms of tensor products of vector bundles. This also serves as a preparation for our discussion of the ring structure of~$\pi_\ast(B_{\textnormal{com}}O)$ in Section \ref{sec:bcomo}.

The non-reduced algebra $\pi_*(ku\wedge \CP^\infty_+)$ is best described using the work of Ravenel and Wilson \cite{RW77}. The canonical map~$\CP^\infty\to BU$ into the second space of the spectrum~$ku$ represents a complex orientation class~$x\in ku^2(\CP^\infty)$. Let~$y_i\in ku_{2i}(\CP^\infty)$ denote the dual~$ku$--homology class of~$x^{i}\in ku^{2i}(\CP^\infty)$. Then the Atiyah-Hirzebruch spectral sequence shows that~$\pi_\ast(ku\wedge\CP^\infty)$ is a free~$\pi_\ast(ku)$--module on the generators~$y_i$ for~$i\geq 0$. In the algebra~$\pi_\ast(ku\wedge\CP^\infty_+)$ the module generators~$y_i$ obey certain relations, which are described by \cite[Theorem 3.4]{RW77}. Consider the formal power series
\[
y(t)=\sum_{i\geq 0} y_i t^i \in \pi_\ast(ku\wedge \CP^\infty_+)[[t]]\, .
\]
Let~$v\in \pi_2(ku)$ be the Bott class. Then the identity
\begin{equation} \label{eq:yy}
y(s)y(t)=y(s+t+vst)
\end{equation}
holds in~$\pi_\ast(ku\wedge \CP^\infty_+)[[s,t]]$. Let~$R$ denote the set of relations amongst the~$y_i$ following from this identity.
Then the canonical map~$\pi_\ast(ku)[\{y_i\}]\to \pi_\ast(ku\wedge \CP^\infty_+)$ induces an isomorphism
\[
\pi_\ast(ku)[\{y_i\}]/R\xr{\cong} \pi_\ast(ku\wedge \CP^\infty_+)
\]
of~$\pi_\ast(ku)$--algebras. In this picture, the ring~$\pi_\ast(B_{\textnormal{com}}U)$ appears as the ideal in~$\pi_\ast(ku\wedge \CP^\infty_+)$ generated by all the~$y_i$ for~$i\geq 1$. It is not difficult to extract from (\ref{eq:yy}) an explicit expression for the product of two classes~$y_k$ and~$y_l$.
One finds that
\begin{equation} \label{eq:ykyl}
y_k y_l=\sum_{i=\max\{k,l\}}^{k+l} \frac{i!}{(i-k)! (i-l)!(k+l-i)!} v^{k+l-i}y_i
\end{equation}
for all~$k,l\geq 0$.

\subsection{Orthogonal case} \label{sec:orthogonal}
Before we describe the homotopy groups of~$C_n(O)$, ~$\Rep_n(O)$ and $B_{\textnormal{com}}O$, we recall some properties of~$RO(C_2)$--graded homotopy groups. For a~$C_2$--spectrum~$X$ and integers~$k,l$ one defines
\[ \pi^{C_2}_{k+l\cdot \sigma}(X)=[S^{k+l\cdot \sigma},X]^{C_2}, \]
where for negative~$k$ and~$l$ one uses the fact that the spheres~$S^1$ and~$S^{\sigma}$ are invertible objects in the genuine~$C_2$--equivariant stable homotopy category. The collection of the groups~$\pi^{C_2}_{k+l\cdot \sigma}(X)$ is often denoted~$\pi_{\Star}^{C_2}(X)$. If~$X$ is a~$C_2$--ring spectrum up to homotopy with multiplication~$\mu$, then~$\pi_{\Star}^{C_2}(X)$ becomes a graded ring, where the product of two classes~$\alpha\in \pi_{k_1+l_1\cdot \sigma}^{C_2}(X)$ and~$\beta\in \pi_{k_2+l_2\cdot \sigma}^{C_2}(X)$ is defined as the composite
\[ S^{k_1+k_2+l_1\cdot \sigma +l_2\cdot \sigma}\cong S^{k_1+l_1\cdot \sigma +k_2+l_2\cdot \sigma}\cong S^{k_1+l_1\cdot \sigma}\wedge S^{k_2 +l_2\cdot \sigma}\xrightarrow{\alpha\wedge \beta} X\wedge X\xrightarrow{\mu} X\, . \]
Forgetting $C_2$--equivariance gives restriction maps $\pi_{k+l\cdot \sigma}^{C_2}(X)\to \pi_{k+l}(X)$ for all $k,l$, which are compatible with the ring structures.

If the multiplication on~$X$ is homotopy-commutative, the ring~$\pi_{\Star}^{C_2}(X)$ is graded commutative in the equivariant sense, which is a little subtle in general as it involves involutions in the Burnside ring (see \cite[Lemma 2.12]{HK01}). However, for~$k\R$ and~$H\underline{\Z}$ the~$RO(C_2)$--graded homotopy ring turns out to be strictly commutative. We give some details in Section \ref{sec:bcomo} below, where we argue that the non-unital homotopy ring~$\pi_{\Star}^{C_2}(k\R\wedge \CP^{\infty})$ is also strictly commutative.

\subsubsection{The homotopy groups of $C_n(O)$} \label{sec:cno} The~$RO(C_2)$--graded ring~$\pi_{\Star}^{C_2}(k\R)$ is well-understood, due to work of Dugger \cite{Du05}, Bruner-Greenlees \cite{BG10} and others. We recommend the survey paper \cite{Gre18}, which discusses different approaches to this calculation and the one of~$\pi_{\Star}^{C_2}(H\underline{\Z})$. Figures \ref{figure:kr} and \ref{figure:hz} are taken from there. For us it is enough to consider the part of~$\pi_{k+l\cdot \sigma}^{C_2}(k\R)$ where~$k\geq l$. This subring is generated by four elements:
\begin{itemize}
	\item The Bott class~$\overline{v}$ in degree~$(1+\sigma)$.
	\item The Euler class~$a$ in degree~$-\sigma$ given by the composite
	\[ S^0\to k\R\wedge S^0 \to k\R\wedge S^{\sigma}, \]
	where the first map is the unit of~$k\R$ and the second map is induced by the inclusion of fixed points~$S^0\hookrightarrow S^{\sigma}$.
	\item The classes~$w$ in degree~$(2-2\cdot \sigma)$ and~$U$ in degree~$(4-4\cdot \sigma)$.
\end{itemize}
Here we have used the notation from Greenlees \cite{Gre18}, except for~$w$, which is called~$2u$ there. The relations these four classes satisfy are generated by~$2a=0$,~$aw=0$,~$a^3\overline{v}=0$ and~$w^2=4U$. See Figure \ref{figure:kr} for a drawing of the homotopy groups of~$k\R$. The horizontal axis is a copy of the ring~$\pi_\ast(ko) \cong \Z[\eta,x,y]/(2\eta, \eta x, \eta^3, x^2-4y)$, where~$\eta=a\bar{v}$ generates~$\pi_1(ko)$,~$x=w \bar{v}^2$ generates~$\pi_4(ko)$ and~$y=U\bar{v}^4$ generates~$\pi_8(ko)$.

\begin{figure}[h]
\begin{tikzpicture}[scale =0.8]
\small
\draw (1.5,1.5) node[anchor=east]{$\bar{v}\,$}; 

\node at (1.5,1.5)  [shape = rectangle, draw]{};
\clip (-4, -4) rectangle (6, 6); 
\draw[step=0.5, gray, very thin] (-4,-4) grid (6, 6);

\draw[blue, ->, thick](2.6,1)--(5.8, 1); 
\draw (3.9, 1) node[anchor=north]{$\Z\cdot 1$};

\draw[blue, ->, thick](1,2.6)--(1, 5.8); 

\draw [-](2,0)-- (6, 4);
\node at (2,0) [shape=circle, draw] {};
\draw (2,0) node[anchor=east]{$w\,$};

\draw [-] (1,1)-- (6, 6);
\node at (1,1)  [shape = rectangle, draw]{};
\draw (1,1) node[anchor=east]{$1\,$};

\draw[red] (1,1) -- (1,-4);


\draw [-] (3,-1)-- (6, 2);
\node at (3,-1)  [shape = rectangle, draw]{};
\draw (3,-1) node[anchor=east]{$U\,$};

\draw [-](4,-2)-- (6, 0);
\node at (4,-2) [shape=circle, draw] {};
\draw (4,-2) node[anchor=east]{$wU\,$};

\draw[red] (1,0.5)-- (6, 5.5);
\draw[red] (1,0)-- (6, 5);
\foreach \x in {1, 2, 3,4,5,6,7,8,9,10}
\draw (1,1-\x/2) node[anchor=east] {$a^{\x}$};
\foreach \x in {1,2,3,4,5,6,7,8,9,10}
\node at (1,1-\x/2) [fill=red, inner sep=1pt, shape=circle, draw] {};

\draw[red] (3,-1) -- (3,-4);

\draw[red] (3,-1.5)-- (6, 1.5);
\draw[red] (3,-2)-- (6, 1);

\foreach \x in {1,2,3,4,5,6}
\node at (3,-1-\x/2) [fill=red, inner sep=1pt, shape=circle, draw] {};


\draw [-] (5,-3)-- (6, -2);
\node at (5,-3)  [shape = rectangle, draw]{};
\draw (5,-3) node[anchor=east]{$U^2\,$};

\node at (6,-4) [shape= circle, draw] {};

\draw[red] (5,-3) -- (5,-4);
\draw[red] (5,-3.5)-- (6, -2.5);
\draw[red] (5,-4)-- (6, -3);
\foreach \x in {1,2}
\node at (5,-3-\x/2) [fill=red, inner sep=1pt, shape=circle, draw] {};



\draw (-1,3)-- (2,6);
\node at (-1,3) [shape=circle, draw] {};
\node at (-0.5,3.5) [shape=rectangle, draw] {};

\draw (-3,5)-- (-2, 6);
\node at (-3,5) [shape=circle, draw] {};
\node at (-2.5,5.5) [shape=rectangle, draw] {};

\draw (-2,4)-- (0, 6);
\node at (-2,4) [shape=circle, draw] {};

\draw (0,2)-- (4, 6);
\node at (0,2) [shape=circle, draw] {};

\foreach \x in {0,1,2,3,4,5}
\node at (-1.5,3.5+\x/2) [fill=red, inner sep=1pt, shape=circle, draw]
{};
\draw[red] (-1.5, 3.5) --(-1.5, 6);

\node at (-0.5,2.5) [fill=red, inner sep=1pt, shape=circle, draw] {};
\draw[red] (-0.5,2.5)-- (3, 6);
\node at (-0.5,3.0) [fill=red, inner sep=1pt, shape=circle, draw] {};
\draw[red] (-0.5,3.0)-- (2.5, 6);

\foreach \x in {0,1}
\node at (-3.5,5.5+\x/2) [fill=red, inner sep=1pt, shape=circle, draw]
{};
\draw[red] (-3.5, 5.5) --(-3.5, 6);

\node at (-2.5,5) [fill=red, inner sep=1pt, shape=circle, draw] {};
\draw[red] (-2.5,5)-- (-1.5, 6);
\node at (-2.5,4.5) [fill=red, inner sep=1pt, shape=circle, draw] {};
\draw[red] (-2.5,4.5)-- (-1, 6);

\draw (1,5.3) node[anchor=east]{$\Z\cdot \sigma$};

\end{tikzpicture}
\caption{The ring~$\pi_{\Star}^{C_2}(k\R)$. The part which is relevant for our computation lies on the diagonal and below. Black circles, squares and lines represent copies of~$\Z$; red dots and lines represent copies of~$\mathbb{Z}/2$. The circles are meant to indicate that the generator is twice what one might expect (for example,~$w$ is `twice a square root of~$U$'). For more details, we refer to \cite{Gre18}.}
\label{figure:kr}
\end{figure}

Passing to~$C_2$--fixed points, the splitting (\ref{eq:splitcnr}) shows that there is an isomorphism
\[ \pi_*(C_n(O))\cong \bigoplus _{k=1}^n \bigoplus_{n\choose k} \pi^{C_2}_{*-k\cdot \sigma}(k\R).\]

It is most convenient to describe this graded group as a graded module over~$\pi_\ast(ko)$. Let~$I$ denote the augmentation ideal of~$\pi_\ast(ko)$, and~$J$ the ideal generated by~$\eta^2,x$ and~$y$. Define
\begin{equation} \label{eq:ak}
A(k)=\begin{cases} (\bigoplus_{i=0}^{j-1}\Z/2[4i])\oplus \pi_\ast(ko)[4j] & k=4j \\
											(\bigoplus_{i=0}^{j-1}\Z/2[4i])\oplus I[4j-1]  & k=4j+1 \\
											(\bigoplus_{i=0}^{j-1}\Z/2[4i])\oplus J[4j-2]  & k=4j+2 \\
											(\bigoplus_{i=0}^{j}\Z/2[4i])\oplus \pi_\ast(ko)[4j+5]  & k=4j+3\,,
								\end{cases}
\end{equation}
where the copies of~$\Z/2$ are acted on by~$\pi_\ast(ko)$ through the augmentation~$\pi_\ast(ko)\to \Z$, i.e.,~$\eta, x$ and~$y$ act trivially.
Then there is an isomorphism of~$\pi_\ast(ko)$--modules
\[ \pi_*(C_n(O))\cong \bigoplus_{k=1}^n \bigoplus_{n\choose k} A(k). \]
It is possible to write down an explicit formula for~$\pi_k(C_n(O))$ as a function of~$k$ and~$n$, but the result does not seem very enlightening. Up to~$k=7$ the groups are listed in Figure \ref{figure:lowhomotopy}. The computation of~$\pi_0(C_n(O))\cong (\Z/2)^{2^n-1}$ is originally due to Rojo \cite{Ro14}, while the fact that~$\pi_1(C_n(O))\cong (\Z/2)^n$ follows from the more general result of G\'omez, Pettet and Souto \cite{GPS12} that~$\pi_1(C_n(G))$ (with~$C_n(G)$ given its natural basepoint) is isomorphic to~$\pi_1(G)^{n}$ for every compact Lie group~$G$.
\begin{figure}[h]
{\def\arraystretch{1.5}\tabcolsep=8pt
\begin{tabular}{l|l||l|l}
$k$ &~$\pi_k(C_n(O))$	&~$k$ &~$\pi_k(C_n(O))$                       \\ \hline\hline
$0$ &~$(\Z/2)^{2^n-1}$	&~$4$ &~$\Z^{n \choose 4}\oplus (\Z/2)^{\sum_{i=5}^{n} {n \choose i}}$ \\ \hline
$1$ &~$(\Z/2)^n$		&~$5$ &~$\Z^{n \choose 3} \oplus (\Z/2)^{{n \choose 4}+ {n \choose 5}}$   \\ \hline
$2$ &~$\Z^{n\choose 2}$	&~$6$ &~$\Z^{{n\choose 2}+{n\choose 6}}\oplus (\Z/2)^{{n\choose 3}+{n\choose 4}}$   \\ \hline
$3$ &~$\Z^n$			&~$7$ &~$\Z^{n+{n\choose 5}}\oplus (\Z/2)^{{n\choose 2}+{n\choose 3}}$\\ \hline
\end{tabular}
}
\caption{The first eight homotopy groups of~$C_n(O)$}
\label{figure:lowhomotopy}
\end{figure}

\begin{Remark}
With little effort, our computation of the homotopy groups of~$C_n(O)$ also leads to a computation of the homotopy groups of the space of commuting~$n$--tuples in the stable spinor group~$Spin$. The main observation is that~$C_n(Spin)$ is a group-like H--space and that the natural map~$C_n(Spin)\to C_n(SO)$ is a covering over every connected component of its image. This can be used to show that there is an equivalence
\[
C_n(Spin)\simeq \pi_0(C_n(Spin))\times \widetilde{C_{n,\mathds{1}}}(O)\, ,
\]
where~$\widetilde{C_{n,\mathds{1}}}(O)$ denotes the universal cover of~$C_{n,\mathds{1}}(O)$, the component of~$C_n(O)$ containing the $n$--tuple of identity matrices. One can easily compute the group
\[
\pi_0(C_n(Spin))\cong (\Z/2)^{2^n-n-1-{n \choose 2}}\,,
\]
by showing that it is isomorphic to the kernel of the Stiefel-Whitney class~$w_2\colon\thinspace \pi_0(C_n(SO))\to H^2(\Z^n;\Z/2)$ (this uses again the fact that~$\pi_0(C_n(Spin))$ is a group).
\end{Remark}

\subsubsection{The homotopy groups of $\Rep_n(O)$} \label{sec:rno} Again it is enough to consider the subring of~$\pi^{C_2}_{{\Star}}(H\underline{\Z})$ in those degrees~$k+l\cdot \sigma$ where~$k\geq l$. It is generated by two elements:
\begin{itemize}
	\item The Euler class~$a$ in degree~$-\sigma$ obtained via the composite~$S^0\to H\underline{\Z}\wedge S^0\to  H\underline{\Z}\wedge S^{\sigma}$.
	\item	The orientation class~$u$ in degree~$2-2\cdot \sigma$.
\end{itemize}
The only relation is~$2a=0$. See Figure \ref{figure:hz} for an illustration of this ring. The map~$\pi_{{\Star}}^{C_2}(k\R)\to \pi_{{\Star}}^{C_2}(H\underline{\Z})$ sends~$a$ to~$a$,~$\overline{v}$ to~$0$,~$w$ to~$2u$ and~$U$ to~$u^2$. In particular, the element~$u$ is not in the image, and as a consequence the natural map~$\pi_*(C_n(O))\to \pi_*(\Rep_n(O))$ is not surjective for~$n\geq 2$, unlike the unitary case. Reading off the columns in Figure \ref{figure:hz}, one obtains
\[ \pi_{2i}(\Rep_n(O))\cong \Z^{n\choose 2i}\oplus (\Z/2)^{\sum_{k=2i+1}^n {n\choose k}}\,, \hspace{0.8cm}  \pi_{2i-1}(\Rep_n(O))=0 \]
for all~$i\geq 1$, and~$\pi_0(\Rep_n(O))\cong \pi_0(C_n(O))$.

\begin{Remark}
It is also possible to compute the groups~$\pi_{i}(\Rep_n(O))$ for~$i\geq 1$ using the methods of Adem, Cohen and G\'omez \cite{ACG10}. If~$G$ is a compact, connected Lie group such that~$\Rep_n(G)$ is path-connected for every~$n$, then by \cite[Theorem 6.1]{ACG10} there is a homeomorphism~$\Rep_n(G)\cong T^n/W$, where~$T\leqslant G$ is a maximal torus and~$W$ is the Weyl group acting diagonally on~$T^n$. This theorem continues to hold even if~$\Rep_n(G)$ is not path-connected, so long as we restrict ourselves to the path-component of the trivial homomorphism, which we denote by~$\Rep_{n,\mathds{1}}(G)$. We can view~$\Rep_{n,\mathds{1}}(O)$ as the colimit of~$\Rep_{n,\mathds{1}}(SO(2m+1))$ as~$m$ goes to infinity. A maximal torus for~$SO(2m+1)$ is homeomorphic to~$(S^1)^{\times m}$ with Weyl group the wreath product~$C_2 \wr \Sigma_m$ acting on~$(S^1)^{\times m}$ by permutation and complex conjugation of the factors. It follows that~$\Rep_{n,\mathds{1}}(O)\cong Sp^\infty((S^\sigma)^{\times n}/C_2)$. This space is homeomorphic to~$\Rep_n(Sp)$, where~$Sp$ is the stable quaternionic unitary group, i.e., the colimit over~$m$ of~$Sp(m)$, the automorphism group of~$\HH^{m}$ preserving the standard hermitian form. The homotopy groups of~$\Rep_n(Sp)$ were determined in \cite[Theorem 6.8]{ACG10} and indeed, their result agrees with ours for~$\pi_i(\Rep_{n,\mathds{1}}(O))$.
\end{Remark}

\begin{figure}[h]
\begin{tikzpicture}[scale =0.8]
\small

\clip (-4, -4) rectangle (6, 6); 
\draw[step=0.5, gray, very thin] (-4,-4) grid (6, 6);

\node at (1,1)  [shape = rectangle, draw]{};
\draw (1,1) node[anchor=east]{$1\,$};

\node at (2,0) [shape=rectangle, draw] {};
\draw (2,0) node[anchor=east]{$u\,$};

\draw[red] (1,1) -- (1,-4);

\foreach \x in {1, 2, 3,4,5,6,7,8,9,10}
\draw (1,1-\x/2) node[anchor=east] {$a^{\x}$};
\foreach \x in {1,2,3,4,5,6,7,8,9,10}
\node at (1,1-\x/2) [fill=red, inner sep=1pt, shape=circle, draw] {};

\node at (3,-1)  [shape = rectangle, draw]{};
\draw (3,-1) node[anchor=east]{$u^2\,$};

\node at (4,-2) [shape=rectangle, draw] {};
\draw (4,-2) node[anchor=east]{$u^3\,$};

\draw[red] (3,-1) -- (3,-4);

\draw[blue, ->, thick](2.6,1)--(5.8, 1); 
\draw (3.9, 1) node[anchor=north]{$\Z\cdot 1$};

\foreach \x in {1,2,3,4,5,6,7,8}
\node at (2,-\x/2) [fill=red, inner sep=1pt, shape=circle, draw] {};

\draw[red] (2,0) -- (2,-4);

\foreach \x in {1,2,3,4,5,6}
\node at (3,-1-\x/2) [fill=red, inner sep=1pt, shape=circle, draw] {};

\draw[red] (4,-2) -- (4,-4);

\foreach \x in {1,2,3,4}
\node at (4,-2-\x/2) [fill=red, inner sep=1pt, shape=circle, draw] {};

\node at (5,-3)  [shape = rectangle, draw]{};
\draw (5,-3) node[anchor=east]{$u^4\,$};

\node at (6,-4) [shape= rectangle, draw] {};

\draw[red] (5,-3) -- (5,-4);
\foreach \x in {1,2}
\node at (5,-3-\x/2) [fill=red, inner sep=1pt, shape=circle, draw] {};


\draw[blue, ->, thick](1,2.6)--(1, 5.8); 
\draw (1,3.9) node[anchor=east]{$\Z\cdot \sigma$};

\node at (-1,3)  [shape = circle, draw]{};
\draw[red](-1.5, 3.5) -- (-1.5, 6);
\foreach \x in {0, 1,2,3,4,5}
\node at (-1.5,3.5+\x/2) [fill=red, inner sep=1pt, shape=circle, draw] {};

\node at (0,2)  [shape = circle, draw]{};
\draw[red](-0.5, 2.5) -- (-0.5, 6);
\foreach \x in {0, 1,2,3,4,5,6,7}
\node at (-0.5,2.5+\x/2) [fill=red, inner sep=1pt, shape=circle, draw] {};

\node at (-3,5)  [shape = circle, draw]{};
\draw[red](-3.5, 5.5) -- (-3.5, 6);
\foreach \x in {0,1}
\node at (-3.5,5.5+\x/2) [fill=red, inner sep=1pt, shape=circle, draw] {};

\node at (-2,4)  [shape = circle, draw]{};
\draw[red](-2.5, 4.5) -- (-2.5, 6);
\foreach \x in {0,1,2,3}
\node at (-2.5,4.5+\x/2) [fill=red, inner sep=1pt, shape=circle, draw]
{};

\end{tikzpicture}
\caption{The ring~$\pi_{\Star}^{C_2}(H\underline{\Z})$. The part which is relevant for our computation lies in the fourth quadrant. Black circles and squares represent copies of~$\Z$; red dots represent copies of~$\Z/2$.}
\label{figure:hz}
\end{figure}

\subsubsection{The homotopy ring of $B_{\textnormal{com}}O$} \label{sec:bcomo}
We now come to the classifying space for commutative orthogonal~K-theory. From the splitting (\ref{eq:splitbcom}) we see that
\[ \pi_*(B_{\textnormal{com}}O)\cong \pi_*((k\R\wedge \CP^{\infty})^{C_2})\cong \bigoplus_{k>0}\pi_*((k\R\wedge S^{k\cdot (1+\sigma)})^{C_2})\cong \bigoplus_{k>0} \pi^{C_2}_{*-k\cdot(1+ \sigma)} (k\R). \]
Thus, there is an isomorphism of~$\pi_\ast(ko)$--modules
\[ \pi_*(B_{\textnormal{com}}O)\cong \bigoplus_{k>0} A(k)[k],\]
where~$A(k)[k]$ denotes the~$k$--fold shift of the graded~$\pi_\ast(ko)$--module~$A(k)$ from Definition (\ref{eq:ak}).

The sphere~$S^{\sigma}$ is an abelian~$C_2$--group, and hence so is its classifying space~$BS^{\sigma}$, which is~$C_2$--homotopy equivalent to $\CP^{\infty}$. It follows that~$k\R\wedge \CP^{\infty}_+$ is an augmented $k\R$-algebra. As a consequence,~$\pi_{\Star}^{C_2}(k\R\wedge \CP^\infty_+)$ is an augmented $\pi_{\Star}^{C_2}(k\R)$--algebra, whose augmentation ideal is naturally isomorphic to~$\pi_{\Star}^{C_2}(k\R\wedge \CP^{\infty})$. In particular,~$\pi_{\Star}^{C_2}(k\R\wedge \CP^{\infty})$ as well as the integer graded part~$\pi_*^{C_2}(k\R\wedge \CP^{\infty})\cong \pi_*(B_{\textnormal{com}}O)$ are non-unital rings, similar to~$\pi_*(ku\wedge \CP^{\infty})$ discussed in Section \ref{sec:bcomu}. The ring structure is best described by first considering the unital~$RO(C_2)$--graded ring~$\pi_{\Star}^{C_2}(k\R\wedge \CP^{\infty}_+)$ and reducing to the augmentation ideal and the~$\Z$--graded part in the end. 

The spectrum~$k\R$ is Real oriented, and the Real orientation chosen in Section \ref{sec:krmodule} restricts to the complex orientation of~$ku$ specified in Section \ref{sec:bcomu}. It follows that~$\pi_{\Star}^{C_2}(k\R\wedge \CP^{\infty}_+)$ is free over~$\pi_{\Star}^{C_2}(k\R)$ on generators~$\overline{y}_i\in \pi^{C_2}_{(1+ \sigma)\cdot i}(k\R\wedge \CP^{\infty}_+)$ for~$i\geq 0$, which restrict to the generators~$y_i\in \pi_{2i}(ku\wedge \CP^{\infty}_+)$ from Section \ref{sec:bcomu}. Moreover, the restriction maps
\[
\pi_{k+l\cdot \sigma}^{C_2}(k\R\wedge \CP^{\infty}_+) \to \pi_{k+l}(ku\wedge \CP^{\infty}_+)
\]
induce an isomorphism of~$\Z$--graded rings
\[
\pi_{(1+ \sigma)*}^{C_2}(k\R\wedge \CP^{\infty}_+) \cong \pi_{2*}(ku\wedge \CP^{\infty}_+)\,.
\]
Indeed, the corresponding statement holds for~$k\R$, where all elements of~$\pi^{C_2}_{(1+ \sigma)*}(k\R)$ are multiples of powers of the Bott element~$\overline{v}$. It then follows for~$k\R\wedge \CP^{\infty}_+$, since the generators~$\overline{y}_i$ lie in degrees~$(1+ \sigma)\cdot i$.

As a consequence, the~$\overline{y}_i$ satisfy the same relations as the~$y_i$ and we have
\begin{equation} \label{eq:relations1}
\overline{y}_k\overline{y}_l=\sum_{i=\max(k,l)}^{k+l}\left( \frac{i!}{(i-k)!(i-l)!(k+l-i)!} \overline{v}^{k+l-i}\cdot \overline{y}_i\right)
\end{equation}
for all~$k,l\geq 0$ (see (\ref{eq:ykyl})). Let this set of relations be denoted by~$\overline{R}$. We obtain an induced map
\begin{equation} \label{eq:relations2}
\varphi\co \pi_{\Star}^{C_2}(k\R)[\{\overline{y}_n\}]/\overline{R}\to \pi_{\Star}^{C_2}(k\R\wedge \CP^{\infty}_+)\, ,
\end{equation}
at least after we note that the right hand side is a commutative ring, or in other words that the coefficients~$\pi_{\Star}^{C_2}(k\R)$ commute with the~$\overline{y}_i$. To see this, we note that the~$C_2$--ring spectrum~$k\R\wedge \CP^{\infty}_+$ is Real oriented, since~$k\R$ is. So we can apply \cite[Lemmas 2.12 and 2.17]{HK01} to obtain that
\[
\alpha \overline{y}_i=(-1)^{i\cdot (k+l)}\overline{y}_i \alpha
\]
for~$\alpha\in \pi_{k+l\cdot \sigma}^{C_2} (k\R)$. Since all such~$\alpha$ with~$k+l$ an odd number are~$2$--torsion (see Figure \ref{figure:kr}), this shows that~$\pi_{\Star}^{C_2}(k\R\wedge \CP^{\infty}_+)$ is indeed commutative.

\begin{Prop} The map \eqref{eq:relations2} is an isomorphism.
\end{Prop}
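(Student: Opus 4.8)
The plan is to deduce the statement from the freeness of $\pi_{\Star}^{C_2}(k\R\wedge \CP^{\infty}_+)$ over $\pi_{\Star}^{C_2}(k\R)$ established above, by checking that $\varphi$ is surjective and that its domain is generated by the ``right'' family of elements. Write $\Lambda=\pi_{\Star}^{C_2}(k\R)$ for brevity. Surjectivity is immediate: by the discussion preceding the proposition the classes $\overline{y}_i$, $i\geq 0$, form a $\Lambda$--basis of $\pi_{\Star}^{C_2}(k\R\wedge \CP^{\infty}_+)$, so in particular they generate the codomain as a $\Lambda$--module, a fortiori as a $\Lambda$--algebra, and each visibly lies in the image of $\varphi$.

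For injectivity I would first observe that the relations $\overline{R}$ already force the domain $\pi_{\Star}^{C_2}(k\R)[\{\overline{y}_n\}]/\overline{R}$ to be generated \emph{as a $\Lambda$--module} by the classes $\overline{y}_i$, $i\geq 0$ (with $\overline{y}_0=1$). This is a purely formal induction on the number of $\overline{y}$--factors of a monomial: given $\overline{y}_{i_1}\cdots\overline{y}_{i_r}$ with $r\geq 2$, relation \eqref{eq:relations1} rewrites $\overline{y}_{i_1}\overline{y}_{i_2}$ as a $\Lambda$--linear combination of single classes $\overline{y}_j$, whence the monomial becomes a $\Lambda$--linear combination of monomials with only $r-1$ factors, and one iterates down to the trivial cases $r\leq 1$. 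Only the shape of \eqref{eq:relations1} -- a product of two $\overline{y}$'s equal to a $\Lambda$--combination of $\overline{y}$'s -- is used, so no divisibility properties of the integer coefficients enter, and the $RO(C_2)$--grading is automatically respected since $\overline{v}$ and each $\overline{y}_i$ lie in degrees that are multiples of $1+\sigma$.

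To conclude, let $F$ be the free $\Lambda$--module on symbols $z_i$, $i\geq 0$. The assignment $z_i\mapsto \overline{y}_i$ defines a surjection $p\colon F\to \pi_{\Star}^{C_2}(k\R)[\{\overline{y}_n\}]/\overline{R}$ by the previous paragraph, and the composite $\varphi\circ p\colon F\to \pi_{\Star}^{C_2}(k\R\wedge \CP^{\infty}_+)$ sends $z_i$ to $\overline{y}_i$, hence coincides with the isomorphism expressing that the $\overline{y}_i$ form a $\Lambda$--basis of the codomain. Therefore $p$ is injective, so $p$ is an isomorphism, and consequently $\varphi=(\varphi\circ p)\circ p^{-1}$ is an isomorphism.

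The only step that takes a little care is the module-generation claim, i.e.\ packaging the reduction of an arbitrary monomial into a clean induction; everything else is formal, as surjectivity is evident and the final comparison is a one-line diagram chase against the basis $\{\overline{y}_i\}$ of $\pi_{\Star}^{C_2}(k\R\wedge \CP^{\infty}_+)$ recorded above.
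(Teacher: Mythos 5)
Your proof is correct and follows essentially the same route as the paper: both arguments rest on the freeness of $\pi_{\Star}^{C_2}(k\R\wedge \CP^{\infty}_+)$ over $\pi_{\Star}^{C_2}(k\R)$ on the classes $\overline{y}_i$ together with the observation that the relations \eqref{eq:relations1} let one rewrite any product of $\overline{y}$'s as a module combination of single $\overline{y}_i$'s, so the source is generated as a module by the $\overline{y}_i$. The paper packages this as an explicit two-sided module inverse $\psi$, while you factor through a free module and compare with the canonical basis isomorphism; this is only a cosmetic difference.
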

\begin{proof} Since~$\pi_{\Star}^{C_2}(k\R\wedge \CP^{\infty}_+)$ is free as a~$\pi_{\Star}^{C_2}(k\R)$--module on the~$\overline{y}_i$, we can define a~$\pi_{\Star}^{C_2}(k\R)$--module map~$\psi$ in the other direction by sending~$\overline{y}_i$ to~$\overline{y}_i$. It is clear that~$\varphi\circ \psi$ equals the identity. Furthermore,~$(\psi \circ \varphi)(\overline{y}_i)=\overline{y}_i$, so it suffices to show that the domain of \eqref{eq:relations2} is generated by the~$\overline{y}_i$ as a~$\pi_{\Star}^{C_2}(k\R)$--module. But this is clear, since by (\ref{eq:relations1}) every product~$\overline{y}_k\overline{y}_l$ can be replaced by a~$\pi_{\Star}^{C_2}(k\R)$--linear combination of the~$\overline{y}_i$. This finishes the proof.
\end{proof}
For the integer graded part of the augmentation ideal, we have an isomorphism
\[\pi_*^{C_2}(k\R\wedge \CP^{\infty})\cong \bigoplus_{k>0} \pi_{*-k\cdot(1+ \sigma)}^{C_2}(k\R)\cdot \overline{y}_k. \]
Given~$\alpha \in \pi_{*-k\cdot(1+ \sigma)}^{C_2}(k\R)$ and~$\beta\in \pi_{*-l\cdot(1+ \sigma)}^{C_2}(k\R)$, the product of~$\alpha \cdot \overline{y}_k$ and~$\beta \cdot \overline{y}_l$ computes as
\[ (\alpha\cdot \overline{y}_k)\cdot (\beta\cdot \overline{y}_l)=(\alpha \beta)\cdot (\overline{y}_k \overline{y}_l)=(\alpha \beta)\cdot \sum_{i= \max(k,l)}^{k+l}\left(\frac{i!}{(i-k)!(i-l)!(k+l-i)!} \overline{v}^{k+l-i}\cdot \overline{y}_i\right).\]
Hence, if we define 
\[ m_i^{k,l}:\pi_{*-k\cdot(1+ \sigma)}^{C_2}(k\R)\otimes \pi_{*-l\cdot(1+ \sigma)}^{C_2}(k\R)\to \pi_{*-i\cdot(1+ \sigma)}^{C_2}(k\R) \]
by the product (formed inside~$\pi_{\Star}^{C_2}(k\R)$)
\[ m_i^{k,l}(\alpha\otimes \beta)=\alpha \beta \overline{v}^{k+l-i}, \]
we obtain:
\begin{Cor} The multiplication~$m$ on 
\[ \pi_*(B_{\textnormal{com}}O)\cong \pi_*^{C_2}(k\R\wedge \CP^{\infty})\cong \bigoplus_{k>0}\pi^{C_2}_{*-k\cdot(1+ \sigma)}(k\R)\cdot \overline{y}_k \]
is given by the formula
\[ m((\alpha\cdot \overline{y}_k)\otimes (\beta\cdot \overline{y}_l))= \sum_{i= \max(k,l)}^{k+l} \left( \frac{i!}{(i-k)!(i-l)!(k+l-i)!} m_i^{k,l}(\alpha\otimes \beta) \cdot \overline{y}_i\right).\]
\end{Cor}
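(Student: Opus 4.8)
The plan is to obtain the formula by a direct manipulation inside the ring $\pi_{\Star}^{C_2}(k\R\wedge \CP^{\infty}_+)$, for which we have just given a presentation by generators and relations.

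First I would recall the two inputs established above. The isomorphism $\pi_*(B_{\textnormal{com}}O)\cong \pi_*^{C_2}(k\R\wedge \CP^{\infty})$ identifies the left-hand side with the integer-graded part of the augmentation ideal of the commutative $RO(C_2)$-graded ring $\pi_{\Star}^{C_2}(k\R\wedge \CP^{\infty}_+)$; and by the preceding Proposition this ring is free over $\pi_{\Star}^{C_2}(k\R)$ on the classes $\overline{y}_i$, $i\geq 0$, subject only to the relations $\overline{R}$ of \eqref{eq:relations1}. Restricting to the augmentation ideal and to integer degrees gives the decomposition $\pi_*^{C_2}(k\R\wedge \CP^{\infty})\cong \bigoplus_{k>0}\pi^{C_2}_{*-k\cdot(1+\sigma)}(k\R)\cdot \overline{y}_k$ of the statement. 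Then, for $\alpha\in \pi^{C_2}_{*-k\cdot(1+\sigma)}(k\R)$ and $\beta\in \pi^{C_2}_{*-l\cdot(1+\sigma)}(k\R)$, I would use bilinearity over $\pi_{\Star}^{C_2}(k\R)$ together with the commutativity of $\pi_{\Star}^{C_2}(k\R\wedge \CP^{\infty}_+)$ -- which lets us move $\beta$ past $\overline{y}_k$, and which was deduced above from \cite[Lemmas 2.12 and 2.17]{HK01} and the fact that the classes of $\pi^{C_2}_{k+l\cdot\sigma}(k\R)$ with $k+l$ odd are $2$-torsion -- to reduce $(\alpha\cdot \overline{y}_k)\cdot (\beta\cdot \overline{y}_l)$ to $(\alpha\beta)\cdot(\overline{y}_k\overline{y}_l)$, with $\alpha\beta$ formed inside $\pi_{\Star}^{C_2}(k\R)$.

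Substituting the relation \eqref{eq:relations1} for $\overline{y}_k\overline{y}_l$ and distributing $\alpha\beta$ over the resulting sum then yields
\[ (\alpha\cdot \overline{y}_k)\cdot (\beta\cdot \overline{y}_l)=\sum_{i=\max(k,l)}^{k+l}\frac{i!}{(i-k)!(i-l)!(k+l-i)!}\,\bigl(\alpha\beta\,\overline{v}^{\,k+l-i}\bigr)\cdot \overline{y}_i, \]
and $\alpha\beta\,\overline{v}^{\,k+l-i}$ is, by definition, $m_i^{k,l}(\alpha\otimes \beta)$. Since $\overline{v}$ has degree $1+\sigma$, a quick degree count shows that each summand lies in $\pi^{C_2}_{*-i\cdot(1+\sigma)}(k\R)\cdot \overline{y}_i$, so the output sits in the claimed summand; this is the asserted formula.

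Every step here is a formal computation in a ring we have already presented, so there is no substantive obstacle. The one point that genuinely needs care -- that the coefficient ring $\pi_{\Star}^{C_2}(k\R)$ commutes with the generators $\overline{y}_i$, so that products can be reorganized as above -- has already been settled before the Proposition. I would therefore write the proof as a short paragraph citing the Proposition and \eqref{eq:relations1} and recording the displayed computation.
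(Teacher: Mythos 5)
Your argument is correct and is essentially the paper's own: the paper likewise deduces the formula by using the commutativity of $\pi_{\Star}^{C_2}(k\R\wedge \CP^{\infty}_+)$ and freeness over $\pi_{\Star}^{C_2}(k\R)$ to rewrite $(\alpha\cdot \overline{y}_k)\cdot(\beta\cdot \overline{y}_l)$ as $(\alpha\beta)\cdot(\overline{y}_k\overline{y}_l)$, substituting the relation \eqref{eq:relations1}, and recognizing $\alpha\beta\,\overline{v}^{\,k+l-i}$ as $m_i^{k,l}(\alpha\otimes\beta)$. Your added degree check that each summand lands in $\pi^{C_2}_{*-i\cdot(1+\sigma)}(k\R)\cdot\overline{y}_i$ is a harmless (and correct) extra remark.
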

Note that if~$\alpha\beta$ is divisible by~$a^3$, all multiplications~$m_i^{k,l}(\alpha\otimes \beta)$ are trivial except for~$m_{k+l}^{k,l}$ and the product~$m((\alpha\cdot \overline{y}_k)\otimes (\beta\cdot \overline{y}_l))$ is just~$\frac{(k+l)!}{k!l!}$ times the usual product~$\alpha\beta \cdot \overline{y}_{k+l}$.
\begin{Example} The product of the two classes~$aU\cdot \overline{y}_5$ and~$U\overline{v}\cdot \overline{y}_3$ is given by
\[ m((aU\cdot \overline{y}_5)\otimes (U\overline{v}\cdot \overline{y}_3))= 10 aU^2\overline{v}^4\cdot \overline{y}_5 + 60 aU^2\overline{v}^3 \cdot \overline{y}_6 + 105  aU^2\overline{v}^2 \cdot \overline{y}_7 + 56 aU^2\overline{v} \cdot \overline{y}_8. \]
As~$a$ is~$2$--torsion, this simplifies to
\[ m((aU\cdot \overline{y}_5)\otimes (U\overline{v}\cdot \overline{y}_3))= aU^2\overline{v}^2\cdot \overline{y}_7. \]
\end{Example}

\subsection{Description as FI-modules and representation stability} \label{sec:fi}
Ramras and Stafa show in \cite[Theorem 7.7]{RS18} that the rational homology groups of the spaces~$C_n(U(m))$,~$\Rep_n(U(m))$ as well as~$C_{n,\mathds{1}}(O(m))$,~$\Rep_{n,\mathds{1}}(O(m))$ -- for fixed~$m$  and varying~$n$ -- are representation stable in the sense of \cite{CEF15}, with stability range independent of~$m$. The homology groups also satisfy homological stability in~$m$, by \cite[Theorem 1.2]{RS18}, hence it follows that the rational homology groups of the spaces~$C_n(U)$,~$\Rep_n(U)$,~$C_n(O)$ and~$\Rep_n(O)$ are also representation stable in~$n$. One might wonder whether the (integral) homotopy groups~$\pi_k(-)$ of these spaces are also representation stable. The answer is `yes' in the unitary case, whereas in the orthogonal case it turns out to depend on~$k$.

To phrase representation stability we consider the functor
\[ \underline{n}\mapsto C_n^{\R} \ ; \ (\alpha\co \underline{n}\hookrightarrow \underline{m})\mapsto \alpha_*(A_1,\hdots,A_n)= (A_{\alpha^{-1}(1)},\hdots,A_{\alpha^{-1}(m)})  \]
from the category of finite sets and injections FI to the category of~$C_2$--spaces, and likewise for~$\Rep_n^{\R}$. Here,~$\underline{n}=\{1,\dots,n\}$ and~$A_{\alpha^{-1}(i)}=\id$ if~$i$ does not lie in the image of~$\alpha$. 
Given a~$\Sigma_n$--module~$A$, we write~$\Ind_{\Sigma_n}^{\FI}A$ for the left induced FI-module~$\Z[\FI(\underline{n},-)]\otimes_{\Sigma_n} A$, i.e., the left adjoint to evaluation on~$\underline{n}$ applied to~$A$. Then the following is perhaps the most compact way to state the computation for the homotopy groups of the spaces~$C_n(U),\Rep_n(U),C_n(O)$ and~$\Rep_n(O)$:
\begin{Prop} \label{prop:fi} There are isomorphisms of FI-modules
\begin{align*} \pi_k(C_-(U))\cong & \bigoplus_{n>0} \Ind_{\Sigma_n}^{\FI}(\pi_{k-n}(ku)) \\
\pi_k(\Rep_-(U))\cong & \bigoplus_{n>0} \Ind_{\Sigma_n}^{\FI}(\pi_{k-n}(H\Z)) \\
\pi_k(C_-(O))\cong & \bigoplus_{n>0} \Ind_{\Sigma_n}^{\FI}(\pi_{k-n\cdot \sigma}^{C_2}(k\R)) \\
\pi_k(\Rep_-(O))\cong & \bigoplus_{n>0} \Ind_{\Sigma_n}^{\FI}(\pi_{k-n\cdot \sigma}^{C_2}(H\underline{\Z}))\, ,
\end{align*}
where~$\Sigma_n$ acts on each of the respective homotopy groups by multiplication with the sign.
\end{Prop}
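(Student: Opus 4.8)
The plan is to derive all four isomorphisms from Theorem~\ref{thm:main} together with the natural stable splitting of a cartesian power, keeping careful track of the $\FI$-functoriality; the one point that is not pure bookkeeping is the identification of the symmetric-group actions that occur. First I would upgrade Theorem~\ref{thm:main} to a statement about $\FI$-diagrams. For an injection $\alpha\co\underline n\hookrightarrow\underline m$ let $\iota_\alpha\co(S^{\sigma})^{\times n}\to(S^{\sigma})^{\times m}$ be the based $C_2$-map that places the $i$-th coordinate into the $\alpha(i)$-th slot and the basepoint $1\in S^{\sigma}$ into the slots not in the image of $\alpha$. Inserting the basepoint $1$ into the $j$-th coordinate of a labelled configuration turns the matrix $A_j$ of \eqref{eq:varphi} into the identity matrix, so under the $C_2$-homeomorphism $\varphi_n\co k\R((S^{\sigma})^{\times n})\to C_n^{\R}$ the operation $\alpha_*$ corresponds exactly to $k\R(\iota_\alpha)$; this is the same kind of direct verification as Lemma~\ref{lem:simplicial}. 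Hence, as an $\FI$-diagram of $C_2$-spaces, $\underline n\mapsto C_n^{\R}$ is isomorphic to $\underline n\mapsto k\R((S^{\sigma})^{\times n})$ with structure maps $k\R(\iota_\alpha)$, and by \eqref{eq:hom} it is $C_2$-equivalent to $\underline n\mapsto\Omega^{\infty}(k\R\wedge(S^{\sigma})^{\times n})$ with $\FI$-structure induced by the $\iota_\alpha$. The identical discussion puts $\Rep_-^{\R}$ in the form $\underline n\mapsto\Omega^{\infty}(H\underline{\Z}\wedge(S^{\sigma})^{\times n})$ via \eqref{eq:rep}, and, after forgetting $C_2$-actions, $C_-(U)$ and $\Rep_-(U)$ in the corresponding forms with $ku$, $H\Z$ and $(S^1)^{\times n}$.

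Next I would invoke the natural stable splitting: for a based $C_2$-space $X$ there is a natural $C_2$-stable equivalence $\Sigma^{\infty}_{C_2}(X^{\times n})\simeq\bigvee_{\emptyset\neq S\subseteq\underline n}\Sigma^{\infty}_{C_2}(X^{\wedge S})$, obtained by iterating $A\times B\simeq A\vee B\vee(A\wedge B)$. The map $\iota_\alpha$ carries the summand indexed by $S$ isomorphically onto the summand indexed by $\alpha(S)$ via the bijection $\alpha|_S$; for a non-monotone $\alpha$ this is a nontrivial permutation, which is precisely why induced $\FI$-modules appear. In $\FI$-language, the diagram $\underline n\mapsto\Sigma^{\infty}_{C_2}(X^{\times n})$ is isomorphic to $\bigoplus_{j\geq1}\Ind_{\Sigma_j}^{\FI}\big(\Sigma^{\infty}_{C_2}(X^{\wedge j})\big)$, where $\Sigma_j$ permutes the smash factors of $X^{\wedge j}$. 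Taking $X=S^{\sigma}$ (so $X^{\wedge j}=S^{j\sigma}$), smashing with $k\R$, applying $\Omega^{\infty}$ --- a finite wedge of spectra is a finite product, cf.\ \eqref{eq:splitcnr} --- and then $\pi_k$, which commutes with finite products and with $\Ind_{\Sigma_j}^{\FI}$ (a finite direct sum in each $\FI$-degree), I obtain a natural isomorphism of $\FI$-modules
\[ \pi_k(C_-(O))\;\cong\;\bigoplus_{j\geq1}\Ind_{\Sigma_j}^{\FI}\big(\pi_k^{C_2}(k\R\wedge S^{j\sigma})\big)\;\cong\;\bigoplus_{j\geq1}\Ind_{\Sigma_j}^{\FI}\big(\pi^{C_2}_{k-j\sigma}(k\R)\big), \]
and likewise in the other three cases, where a priori $\Sigma_j$ acts on $\pi^{C_2}_{k-j\sigma}(k\R)$ through its permutation action on $S^{j\sigma}$.

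It then remains to identify this action with the sign character. For the two unitary statements a coordinate permutation of $S^{j}$ has degree $\operatorname{sgn}(\pi)$, hence acts on $\pi_{k-j}(ku)$ (resp.\ $\pi_{k-j}(H\Z)$) as multiplication by $\operatorname{sgn}(\pi)$, as asserted. For the orthogonal statements it suffices to treat a transposition $\tau$: the swap of two $\sigma$-summands of $\R^j$ is $C_2$-equivariantly isotopic to $(-\id_{\sigma})\oplus\id_{(j-1)\sigma}$, so it has underlying degree $-1$ and fixed-point degree $+1$, i.e.\ its class in $[S^{j\sigma},S^{j\sigma}]^{C_2}\cong\pi_0^{C_2}(\mathbb{S})\cong A(C_2)$ is $1-[C_2/e]$. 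Through the $\pi^{C_2}_{\star}(\mathbb{S})$-module structure, $\tau$ therefore acts on $\pi^{C_2}_{k-j\sigma}(k\R)$ as multiplication by the image of $1-[C_2/e]$ under the unit $\mathbb{S}\to k\R$, an element of $\pi_0^{C_2}(k\R)=\pi_0(k\R^{C_2})=\pi_0(ko)\cong\Z$. Since $[C_2/e]=\operatorname{tr}^{C_2}_e(1)$ maps to $\operatorname{tr}^{C_2}_e(1_{ku})$, whose restriction is $(1+c)(1_{ku})=2\cdot 1_{ku}$ (complex conjugation acting trivially on $\pi_0(ku)=\Z$), and $\res^{C_2}_e$ is an isomorphism on $\pi_0$, this image equals $1-2=-1$. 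Hence $\tau$ acts as $-\id$, $\Sigma_j$ acts by $\operatorname{sgn}$, and the statement for $C_-(O)$ follows; the identical argument with $(H\underline{\Z})^{C_2}\simeq H\Z$ gives $\Rep_-(O)$.

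The steps needing real care are the naturality of the stable splitting with respect to \emph{arbitrary} injections --- this is what produces the $\Ind_{\Sigma_j}^{\FI}$-shape, and one must check that non-monotone $\alpha$ contribute the correct permutations --- and the identification of the Burnside-ring element $1-[C_2/e]$ with $-1$ in the last paragraph; the latter, pinning down the $\Sigma_j$-action, is the conceptual crux, though it is short once one works in the genuine $C_2$-stable category rather than trying to verify it degreewise in $\pi_{\star}^{C_2}(k\R)$. Everything else is routine assembly of Theorem~\ref{thm:main}, the splittings \eqref{eq:splitcnr}--\eqref{eq:splitrnr}, and standard properties of $\Omega^{\infty}$ and induced $\FI$-modules.
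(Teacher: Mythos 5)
Your proposal is correct, and its overall skeleton is the same as the paper's: identify the FI-structure on $C_-^{\R}$ and $\Rep_-^{\R}$ with the maps induced by inserting basepoints into $(S^{\sigma})^{\times n}$ via the homeomorphisms $\varphi_n$, upgrade the stable splitting of the cartesian powers to an FI-equivariant splitting (the paper phrases this as $\Sigma^{\infty}(S^{\sigma})^{\times -}\simeq \bigvee_{n>0}\Sigma^{\infty}(\FI(n,-)_+\wedge_{\Sigma_n}S^{n\cdot\sigma})$, which is exactly your sum over induced blocks), and then smash with $k\R$ resp.\ $H\underline{\Z}$ and take homotopy groups. Where you genuinely diverge is the crux, the identification of the $\Sigma_n$-action as the sign in the equivariant cases: the paper quotes Hu--Kriz \cite[Lemma 2.17]{HK01}, i.e.\ Real-orientability of $k\R$ and $H\underline{\Z}$, to conclude that the switch on $S^{\sigma}\wedge S^{\sigma}$ becomes $-1$ after smashing, whereas you compute the class of the coordinate swap in $\{S^{j\sigma},S^{j\sigma}\}^{C_2}\cong A(C_2)$ as $1-[C_2/e]$ (underlying degree $-1$, fixed-point degree $+1$) and push it forward along the unit, using $\res(\mathrm{tr}(1))=2$ and the fact that restriction is an isomorphism on $\pi_0$ for both $k\R$ and $H\underline{\Z}$. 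Your Burnside-ring argument is correct and is in some ways more elementary and more general (it needs no orientation theory, only that the image of $[C_2/e]$ in $\pi_0^{C_2}$ is $2$ and is detected by restriction), while the paper's appeal to \cite{HK01} is shorter given that the same lemma is already used elsewhere in the paper (for the commutativity of $\pi_{\Star}^{C_2}(k\R\wedge\CP^{\infty}_+)$ and implicitly for the module structure). Either route completes the proof.
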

\begin{proof}
As follows from the proof of Theorem \ref{thm:main}, under the equivalences
\[ C_n^{\R}\simeq \Omega^{\infty} ((k\R\wedge S^{\sigma})^{\times n}) \ \text{and} \ \Rep_n^{\R}\simeq \Omega^{\infty}((H\underline{\Z}\wedge S^{\sigma})^{\times n}) \]
the maps~$\alpha_*$ are induced by the~$C_2$--maps~$f_{\alpha}\co (S^{\sigma})^{\times n}\to (S^{\sigma})^{\times m}$ defined via~$f_{\alpha}(x_1,\hdots,x_n)=(x_{\alpha^{-1}(1)},\hdots, x_{\alpha^{-1}(m)})$, where~$x_{\alpha^{-1}(i)}=*$ if~$i$ does not lie in the image of~$\alpha$.

Taking this FI-module structure into account, the stable splitting of the products~$(S^{\sigma})^{\times n}$ can be phrased by saying that there is an equivalence of functors~$\FI\to \Ho(Sp_{C_2})$
\[ \Sigma^{\infty} (S^{\sigma})^{\times -}\simeq \bigvee_{n>0}\Sigma^{\infty} (\FI(n,-)_+\wedge_{\Sigma_n} S^{n\cdot \sigma}),\]
with~$\Sigma_n$ acting on~$S^{n\cdot \sigma}$ by permuting the coordinates. The result then follows by smashing with~$k\R$ or~$H\underline{\Z}$ and taking the respective homotopy groups. Non-equivariantly it is clear that the resulting~$\Sigma_n$--action is through the sign representation. For the equivariant homotopy groups we again make use of \cite[Lemma 2.17]{HK01}, which implies that after smashing with~$k\R$ or~$H\underline{\Z}$, the symmetry isomorphism~$\tau\co S^{\sigma}\wedge S^{\sigma}\xrightarrow{\cong} S^{\sigma}\wedge S^{\sigma}$ becomes homotopic to~$-1$.
\end{proof}
Recall that an FI-module $M$ is called `representation stable', or `centrally stable', if there exists an $m\in \N$ such that the natural map 
\[ L^{\FI}_{\leq m}i^*_{\leq m}(M)\to M \]
is an isomorphism (cf. \cite[Theorem C]{CEFN14}). Here, $i^*_{\leq m}(M)$ denotes the restriction of $M$ to the full subcategory $\FI_{\leq m}$ of FI spanned by all $\underline{l}$ with $l\leq m$, and $L^{\FI}_{\leq m}$ denotes the left Kan extension back up to an~FI-module. If this map is an isomorphism for a given $m$, we say that `M is representation stable in degrees~$n\geq m$'. This condition implies by definition that $M$ can be reconstructed from its values~$M(\underline{l})$ for $l\leq m$ using the FI-module structure. In this sense, $M$ stabilizes at $m$.

Now, for any $\Sigma_n$-module $A$, one has 
\[ L_{\leq m}^{\FI}i^*_{\leq m}(\Ind_{\Sigma_n}^{\FI}(A))\xrightarrow{\cong} \Ind_{\Sigma_n}^{\FI}(A) \]
for $m\geq n$ and $L_{\leq m}^{\FI}i^*_{\leq m}(\Ind_{\Sigma_n}^{\FI}(A))=0$ for $m<n$. (To see the former, note that if $m\geq n$, then $\Ind_{\Sigma_n}^{\FI}$ is the left Kan extension along the composite $B\Sigma_n\hookrightarrow \FI_{\leq m}\hookrightarrow \FI$, and hence, by transitivity, $\Ind_{\Sigma_n}^{\FI}(A)$ is in the image of $L_{\leq m}^{\FI}$. Thus $L_{\leq m}^{\FI}i^*_{\leq m}(\Ind_{\Sigma_n}^{\FI}(A))\to \Ind_{\Sigma_n}^{\FI}(A)$ is an isomorphism, since left Kan extensions along full embeddings are fully-faithful.) Hence it follows that~$\pi_k(-)$ is representation stable if and only if almost all of the~$\Sigma_n$--modules on the right hand side of the respective equivalence in Proposition \ref{prop:fi} are trivial, in which case the top non-trivial homotopy group yields the degree of stabilization.

\begin{Cor} \leavevmode \begin{itemize} \item The FI-modules~$\pi_k(C_-(U))$ and~$\pi_k(\Rep_-(U))$ are representation stable in degrees~$n\geq k$.
\item The FI-module~$\pi_k(C_-(O))$ is representation stable if and only if~$k\not\equiv 0\, (\text{mod }4)$. In these cases, it is stable in degrees~$n\geq k$ if~$k\equiv 1,2 \, (\text{mod }4)$ and it is stable in degrees~$n\geq (k-2)$ if~$k\equiv 3\, (\text{mod }4)$.
\item The FI-module~$\pi_k(\Rep_-(O))$ is representation stable if and only if~$k$ is odd, in which case it is the~$0$--module.
\end{itemize}
\end{Cor}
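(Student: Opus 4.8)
The plan is to run everything through Proposition \ref{prop:fi} together with the remark immediately following it. By that proposition each of the four FI-modules has the form $\bigoplus_{n>0}\Ind_{\Sigma_n}^{\FI}(G_n)$, with $G_n$ equal (respectively) to $\pi_{k-n}(ku)$, $\pi_{k-n}(H\Z)$, $\pi^{C_2}_{k-n\cdot\sigma}(k\R)$ and $\pi^{C_2}_{k-n\cdot\sigma}(H\underline{\Z})$; and such a direct sum is representation stable precisely when $G_n=0$ for all large $n$, in which case the largest $n$ with $G_n\neq 0$ is the degree of stabilization. So the corollary becomes the purely algebraic task of deciding, for each $k$, whether $G_n$ vanishes eventually and where the last nonzero value sits. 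The required input is already available: $\pi_*(ku)=\Z[v]$ with $|v|=2$, $\pi_*(H\Z)=\Z$ concentrated in degree $0$, the graded group $\pi^{C_2}_{*-n\cdot\sigma}(k\R)=A(n)$ of Definition (\ref{eq:ak}), and the computation of $\pi_*(\Rep_n(O))$ from Section \ref{sec:rno}.

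The \emph{unitary cases} are immediate: $\pi_{k-n}(ku)\neq 0$ exactly for $0\leq n\leq k$ with $n\equiv k\pmod 2$, and $\pi_0(ku)=\Z$, so $n=k$ is the last nonzero value and $\pi_k(C_-(U))$ is stable in degrees $n\geq k$; similarly $\pi_{k-n}(H\Z)\neq 0$ only for $n=k$, so for $k\geq 1$ the module $\pi_k(\Rep_-(U))$ is the single summand $\Ind_{\Sigma_k}^{\FI}(\Z)$, stable in degrees $n\geq k$ (both are zero FI-modules when $k=0$, hence stable in degrees $n\geq 0$). For $\pi_k(\Rep_-(O))$ I would first invoke Section \ref{sec:rno}, where $\pi_{2i-1}(\Rep_n(O))=0$ for every $n$; taking $n$ large this forces $\pi^{C_2}_{k-l\cdot\sigma}(H\underline{\Z})=0$ for all $l>0$ when $k$ is odd, so $\pi_k(\Rep_-(O))$ is the zero FI-module and is trivially representation stable. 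When $k$ is even (including $k=0$), the formula of Section \ref{sec:rno} shows that $\dim_{\mathbb{F}_2}(\pi_k(\Rep_n(O))\otimes\mathbb{F}_2)$ grows exponentially in $n$, whereas a sum $\bigoplus_{l>0}\Ind_{\Sigma_l}^{\FI}(G_l)$ with only finitely many $G_l\neq 0$ has $\mathbb{F}_2$-dimensions polynomial in $n$; hence infinitely many summands survive and the module is not representation stable (concretely, $\pi^{C_2}_{k-l\cdot\sigma}(H\underline{\Z})\cong\Z/2$ for every $l>k$).

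The substantive case is $\pi_k(C_-(O))$, where $G_n=A(n)_k$ is the degree-$k$ part of $A(n)$. I would use three features of (\ref{eq:ak}): (i) the copies of $\Z/2$ appearing in (\ref{eq:ak}) (those on which $\eta$, $x$, $y$ act trivially) sit only in degrees divisible by $4$, and for $n=4j$ the top one sits in degree $n-4$; (ii) the remaining summand of $A(n)$ is $\pi_*(ko)[4j]$, $I[4j-1]$, $J[4j-2]$ and $\pi_*(ko)[4j+5]$ for $n\equiv 0,1,2,3\pmod 4$, where $I=\pi_{\geq 1}(ko)$ and $J=\pi_{\geq 2}(ko)$, so its lowest nonzero degree is $n$, $n-1$, $n-2$ and $n+2$ respectively; and (iii) $\pi_j(ko)\neq 0$ exactly for $j\equiv 0,1,2,4\pmod 8$. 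From (i): if $k\equiv 0\pmod 4$, then for every $n=4j$ with $n\geq k+4$ one of these $\Z/2$-summands sits in degree $k$, so $A(n)_k\neq 0$ for infinitely many $n$ and $\pi_k(C_-(O))$ is \emph{not} representation stable. If $k\not\equiv 0\pmod 4$, none of these $\Z/2$-summands reaches degree $k$, so $A(n)_k\neq 0$ forces the remaining summand to be nonzero in degree $k$; by (ii) this bounds $n$ above (by $k$, $k+1$, $k+2$ and $k-2$ in the four congruence classes of $n$ modulo $4$), hence only finitely many summands survive and the module \emph{is} representation stable. To pin down the exact stabilization degree, one then runs, for each residue of $k$ modulo $4$, through the four families of $n$ and uses the $8$-periodicity (iii) to find the last $n$ with $A(n)_k\neq 0$; a short check yields $n=k$ when $k\equiv 1,2\pmod 4$ and $n=k-2$ when $k\equiv 3\pmod 4$.

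I expect the only real work to be this last step: the finite but fiddly bookkeeping of when the $8$-periodic pattern of $\pi_*(ko)$ lands on degree $k$ inside each of the four families, followed by taking the maximum. The asymmetric shift in (ii) — the remaining summand of $A(4j+3)$ begins two degrees \emph{above} $n$ rather than at or below $n$ — is exactly what forces the stabilization degree down to $k-2$ in the class $k\equiv 3\pmod 4$, and it would be easy to misplace that shift or an offset in the mod-$8$ pattern, so that is the computation I would carry out most carefully.
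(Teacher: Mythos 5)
Your proposal is correct and follows the same route the paper takes: the corollary is read off from Proposition \ref{prop:fi} together with the criterion stated just before it (a sum $\bigoplus_{n>0}\Ind_{\Sigma_n}^{\FI}(G_n)$ is representation stable iff almost all $G_n$ vanish, the top nonvanishing $n$ giving the stabilization degree), combined with the known homotopy of $ku$, $H\Z$, and the description of $\pi^{C_2}_{*-n\sigma}(k\R)$ and $\pi^{C_2}_{*-n\sigma}(H\underline{\Z})$ from (\ref{eq:ak}) and Figures \ref{figure:kr}, \ref{figure:hz}. Your structural observations (i)--(iii) and the resulting case check for $C_-(O)$ (including the shift making $A(4j+3)$ start in degree $n+2$, which produces the bound $n\geq k-2$ for $k\equiv 3\bmod 4$) are exactly the bookkeeping the paper leaves implicit, and they yield the stated answers.
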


\end{document}